\numberwithin{equation}{section}
\definecolor{darkgreen}{rgb}{0,0.7,0}
\newcommand{\RR}{\mathbb{R}}
\newcommand{\norm}[1]{\Vert #1 \Vert}
\newcolumntype{M}[1]{>{\centering\arraybackslash}m{#1}}
\providecommand{\keywords}[1]{\textbf{{Keywords:}} #1}
\numberwithin{equation}{section}
	\newtheorem{remark}{Remark}[section]
	\newtheorem{theorem}{Theorem}[section]
	\newtheorem{lemma}{Lemma}[section]
	\newtheorem{proposition}{Proposition}[section]
\title{Feedback stabilization and observer design for sterile insect technique model}
\author[]{Kala Agbo bidi}
\affil[]{Sorbonne Universit\'{e}, CNRS, Universit\'{e} Paris-Cit\'{e}, Laboratoire Jacques-Louis Lions, LJLL,  INRIA  \'{e}quipe CAGE, F-75005 Paris, France,
\texttt{kala.agbo\_bidi@sorbonne-universite.fr}}
\date{\empty}
\begin{document}
\maketitle
\begin{abstract}
This paper focuses on the feedback global stabilization and observer construction for a sterile insect technique model. The sterile insect technique (SIT) is one of the most ecological methods for controlling insect pests responsible for worldwide crop destruction and disease transmission. In this work, we construct a feedback law that globally asymptotically stabilizes a SIT model at extinction equilibrium. Since the application of this type of control requires the measurement of different states of the target insect population, and, in practice, some states are more difficult or more expensive to measure than others, it is important to know how to construct a state estimator which from a few well-chosen measured states, estimates the other ones, as the one we build in the second part of our work. In the last part of our work, we show that we can apply the feedback control with estimated states to stabilize the full system.
\end{abstract}

\keywords{sterile insect technique; pest control; feedback control design; observer design; Lyapunov stability; mosquito population control; vector-borne disease}

\section{Introduction}	
SIT is presently one of the most ecological methods for controlling insect pests responsible for disease transmission or crop destruction worldwide. This technique consists in releasing sterile males into the insect pest population \cite{barclay1980sterile, gato2021sterile,vreysen2006sterile}. This approach aims at
 reducing fertility and, consequently, reducing the target insect population after a few generations. Classical SIT has been modeled and studied theoretically in a large number of papers to derive results to study the success of these strategies using discrete, continuous, or hybrid modeling approaches (for instance, the recent papers\cite{strugarek2019use,almeida2022optimal,almeidaBarrier2022,almeidaRolling2023,leculierRolling2023,bliman2022robust,bliman2019implementation}).

Despite this extensive  research, little has been done concerning the stabilization of the target population near extinction after the decay caused by the massive initial SIT intervention and  there are  still major difficulties due to the complexity of the dependency on climate, landscape and many
 other parameters which would be difficult to be
 integrated into the mathematical models studied.
 Not being able to consider all these
 parameters in our mathematical models and knowing
 that these external factors strongly impact the
 evolution of the density of the target population,
 we focus our studies on releases that now depend
 on the target population density measurements since, as we will see below, this makes our control more robust. Indeed, several monitoring
 tools can provide information on the size of the
 wild population throughout the year. So, a control that considers this
 information to adapt the size of the
 releases is possible and useful. This was already the case of  \cite{bidi2023global,bliman2022robust} in which a state feedback control law gives significant robustness qualities to the mathematical model of SIT.
Although this approach provides evidence in terms
of robustness because the control is directly
 adjusted according to the density of the population,
 its application requires to  continuously measure the
  different states of the model.
  In practice, traps allow data to
   be collected to analyze the control's impact and  technology is being developed that may allow us to obtain
   continuous data in the near future.

  However, specific categories of data are still problematic or very expensive to obtain. For example, during a SIT intervention, it is difficult to measure the density of young females that have not yet been fecundated or of females that were fecundated by  wild males.
  In this work we use another
    control theory tool, which consists of constructing
    a state estimator for a dynamical system   and using this
   estimator to apply feedback control.
A state observer or state estimator is a system that
 provides an estimate of the natural state using some partial measurements of the real system. In our case, using traps, wild males as well as sterile males, can be measured. Using the observer system technique, we have built a system that allows us to estimate all other states.
	
	The problem of observer design for linear systems was
	established and solved by \cite{kalman1960new}
	and  \cite{luenberger1964observing}.
	While Kalman's Observer \cite{kalman1960new} was highly successful for linear systems,
	extending it to nonlinear systems took a
	 lot of work. In several cases, the observer can be obtained from the
	 extended Kalman filter by a particular
	 choice of the matrix gain using linear matrix inequalities (LMIs).   The development of the observer in this paper
	 was motivated by its application to the SIT model.
	 A model of this process can be written as
 \begin{align}
	&\dot x = Ax + B(y)x + Du,\\& y = Cx,
 \end{align}
  where $y\in\RR^m$ is the output, $x\in\RR^n$ is the state vector,
  and $u\in\RR^p$ is the input. The output matrix $B(y)$ is such that the coefficients $b(y)_{ij}$ are bounded
  for all $i,j$.
  
  Our paper has three parts.  In the first part,
  thanks to the backstepping approach,
   we build a feedback control law that
   stabilizes the zero population state for the SIT model for the mosquito
   population, which considers only the compartments
   of young females and fertilized females presented
   in \cite{anguelov2012mathematical}.
   In the second part  we construct a
   state estimator for the SIT model. Finally, in the third part  we show that the application
   of this feedback, depending on the 
   measured states and the ones estimated thanks to the
   state estimator, globally stabilizes the system.
\section{Mosquito population dynamics}
The mosquito life cycle has several phases.
The aquatic stage comprises eggs, larvae, and pupa,
followed by the adult stage, where we consider both wild males and females. After
 emergence from the pupa, a female mosquito
 needs to mate and  then to get a blood meal
 before it  can start laying eggs. Then every $4-5$
days, it will take a blood meal and lay $100-150$
eggs at different places ($10-15$ per place).
For the mathematical description, we will consider the following compartments \cite{anguelov2012mathematical}.
\begin{itemize}
	\item $E$ the density of population in aquatic stage,
	\item $Y$ the density of young females, not yet laying eggs,
	\item $F$ the density of fertilized and egg-laying females,
	\item $M$ the density of males,
	\item $M_s$ the density of sterile males,
	\item $U$ the density of females that mate with sterile males.
\end{itemize}
The $Y$ compartment represents
the stage of the young females before
the start of their gonotropic cycle, i.e.,
 before they mate and take their first blood meal.
 It  generally lasts for 3 to 4 days.
The sterile insect technique introduces male
mosquitoes to compete with wild males. We denote by $M_s$ the density of sterile
mosquitoes and by $U$ the density of females that have mated with them.
We assume that a female mating mosquito has probability
 $\frac{M}{M+M_s}$ to mate with a wild male and probability $\frac{M_s}{M+M_s}$
 to mate with a sterile one. Hence, the transfer rate $\eta$ from the compartment $Y$
  splits into transfer rate of $\frac{\eta_1 M}{M+M_s}$ to compartment $F$ and a transfer rate of
  $\frac{\eta_2 M_s}{M+M_s}$ to compartment $U$ of females that will be laying sterile (nonhatching) eggs.
  The mathematical model is the system of ordinary differential equations presented in \cite{esteva2005mathematical}
  \begin{align}
	  &\dot E = \beta_E F(1-\frac{E}{K})  -(\delta_E + \nu_E)E,\label{eq:S1E1}\\
	  &\dot M = (1-\nu)\nu_E E - \delta_MM,\label{eq:S1E2}\\
	  &\dot Y = \nu\nu_E E - \frac{\eta_1 M }{M+M_s}Y-\frac{\eta_2 M_s }{M+M_s}Y-\delta_YY,\label{eq:S1E3}\\
	  &\dot F = \frac{\eta_1 M }{M+M_s} Y -\delta_FF,\label{eq:S1E4}\\
	  &\dot U = \frac{\eta_2 M_s }{M+M_s}Y -\delta_UU,\label{eq:S1E5}\\
	  &\dot M_s = u -\delta_sM_s.\label{eq:S1E6}
  \end{align}
  The  parameter $\delta_Y$
is the mortality rate, for young females (they can  die without mating for diverse reason like predators or other hostile environmental conditions).
Male mosquitoes can mate for most
of their lives. A female mosquito needs
a successful mating to be able to reproduce for the rest
of her life, $\beta_E>0$ is the oviposition rate;
$\delta_E,\delta_M,\delta_F,\delta_Y, \delta_s >0 $ are the death
rates  respectively for eggs, wild adult males, fertilized
females, young females and sterile males;
$\nu_E>0$ is the hatching rate for eggs;
$\nu\in (0,1)$, the probability
that a pupa gives rise to a
female, and $(1-\nu)$ is, therefore,
the probability of giving rise to a male.
$K>0$ is the environmental capacity
 for eggs. It can be interpreted as
 the maximum density of eggs that females
 can lay in breeding sites. Since here the
 larval and pupal compartments are not present,
 we consider that $E$ represents all the aquatic
 compartments, in which case,  this term $K$
 represents a logistic law's carrying capacity
 for the aquatic phase, which also includes the
 effects of competition between larvae.
  The control function $u$ represents the number of
  mosquitoes released during the SIT intervention.
  It is interesting to follow the evolution of the state $U$ because female mosquitoes, once fertilized by sterile males, will continue their gonotrophic cycle normally and, therefore, can still transmit disease.
  We will assume in this work that
  \begin{gather}\label{deltas>deltaM}
	\delta_s\geq\delta_M.
  \end{gather}

In \cite{anguelov2012mathematical,esteva2005mathematical},  equilibria and their stability property were studied for the system without control.
\begin{align}
	&\dot E = \beta_E F(1-\frac{E}{K})  -(\delta_E + \nu_E)E,\label{eq:SS11E1}\\
	&\dot M = (1-\nu)\nu_E E - \delta_MM,\label{eq:SS11E2}\\
	&\dot Y = \nu\nu_E E - (\eta_1+\delta_Y)Y,\label{eq:SS11E3}\\
    &\dot F = \eta_1 Y -\delta_FF.\label{eq:SS11E4}\\
\end{align}
Its basic offspring number is $\mathcal{R}_0 = \frac{\eta_1\beta_E\nu\nu_E}{\delta_F(\nu_E+\delta_E)(\eta_1+\delta_Y)}$.
For the rest of our work, we assume that
\begin{gather}\label{eq:biologicalcond}
    \mathcal{R}_0>1.
\end{gather}

\section{Global stabilization by a feedback law}
We assume that wild males are more likely to fertilize young females because they are born in the same egg-laying site. We define
\begin{gather}\label{eq:eta1-eta2}
	\Delta\eta = \eta_1-\eta_2\geq 0.
\end{gather}

Other authors, such as in
\cite{anguelov2012mathematical},
have already studied the
stability of this type of model.
The difference in our approach
lies in the kind of control
used initially for global stabilization.
Indeed, in most of the prior studies
the controls $u$ studied were
independent of system states. Some previous works have considered certain simple applications of feedback control to SIT (see, for instance, \cite{bliman2022robust, bliman2019feedback,2023Rossi}).
In a previous paper \cite{bidi2023global},
we used the backstepping method to build a
feedback control system that simplifies the SIT
model, which is presented in \cite{strugarek2019use},
assuming that all females are immediately fertilized.
Here we consider the system
\begin{align}
	&\dot E = \beta_E F(1-\frac{E}{K})  -(\delta_E + \nu_E)E,\label{eq:S11E1}\\
	&\dot M = (1-\nu)\nu_E E - \delta_MM,\label{eq:S11E2}\\
	&\dot Y = \nu\nu_E E -\frac{\Delta\eta M }{M+M_s}Y - (\eta_2+\delta_Y)Y,\label{eq:S11E3}\\
	&\dot F = \frac{\eta_1 M }{M+M_s} Y -\delta_FF,\label{eq:S11E4}\\
	&\dot U = \frac{\eta_2 M_s }{M+M_s}Y -\delta_UU,\label{eq:S11E5}\\
	&\dot M_s = u -\delta_sM_s.\label{eq:S11E6}
\end{align}
Let $\mathcal{N} := [0,+\infty)^6$ and $\mathcal{X} := (E,M,Y,F, U,M_s)^T$. When applying a feedback law $u:\mathcal{N}\rightarrow [0,+\infty)$, the closed-loop system is  the  system
\begin{equation}
	\dot{\mathcal{X}}= H(\mathcal{X},u(\mathcal{X})),
			\label{eq:closed-loop}
\end{equation}
where $H$ is the right-hand side of  Eqs \eqref{eq:S11E1}--\eqref{eq:S11E6}.
The construction method remains
the same as in our previous paper \cite{bidi2023global}.
In this work, we also consider solutions
in the Filippov sense
of our discontinuous closed-loop system (see, for instance \cite{1960-Filippov-MS,1967-Hermes-AP,1988-Filippov-book, 1994-Coron-Rosier-JMSEC,1998-Clarke-Ledyaev-Stern-1998,2005-Bacciotti-Rosier-book}  ).
Let  us define $x: = (E,M,Y,F,U)^T$. We must rewrite the
 target system \eqref{eq:S11E1}--\eqref{eq:S11E5}  in the following form to apply the backstepping method (see, for instance\cite[Theorem 12.24, page 334]{coron2007control}):
	\begin{equation}
		\left\{
		\begin{aligned}
			&\dot{x}= f(x,M_s),\\
			&\dot{M_s}  = u-\delta_sM_s,
		\end{aligned}
		\right.
	\end{equation}
	where $f:\RR^6\to\RR^5$ represents the right hand side of  \eqref{eq:S11E1}--\eqref{eq:S11E5}.
	We then consider the control system $\dot{x}= f(x,M_s)$ with
 the state being $x\in\mathcal{D}:=[0, +\infty)^5$ and
 the control being  $M_s\in [0,+\infty)$.
 We assume that  $M_s$ is of the form $M_s = \theta M$ for a constant $\theta >0$. Then, we  define
 and  study the closed-loop system
\begin{align}
	\dot{x}= f(x,\theta M).
	\label{eq:backeq}
\end{align}
Its offspring number is
\begin{gather}
   \label{eq:Rtheta}
	\mathcal{R}(\theta):= \frac{\beta_E\eta_1\nu\nu_E}{\delta_F(\nu_E+\delta_E)(\Delta\eta +(1+\theta)(\eta_2+\delta_Y))}.
\end{gather}
Note that if $\mathcal{R}(\theta)\leq 1$, $\textbf{0} \in \RR^5$ is the only equilibrium point of the system in $\mathcal{D}$.
Our next proposition shows that the feedback law $M_s=\theta M$ stabilizes our control system $\dot{x}= f(x, M_s)$ if $\mathcal{R}(\theta)<1$.

\begin{proposition}
\label{prop-case-thetaM-GAS}
	Assume that
\begin{align}
	\mathcal{R}(\theta)<1 \label{eq:inequalitytheta}.
\end{align} 
Then $\textbf{0}$  is globally exponentially
 stable in $\mathcal{D}$ for  system \eqref{eq:backeq}. The  exponential convergence rate is  bounded from above
 by the positive constant $c$  defined by relation \eqref{def-c>0}.
\end{proposition}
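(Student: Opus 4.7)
My plan is to exploit the structural observation that, under the feedback $M_s = \theta M$, the two ratios $M/(M+M_s)$ and $M_s/(M+M_s)$ collapse to the constants $1/(1+\theta)$ and $\theta/(1+\theta)$, so that system \eqref{eq:backeq} becomes linear in $x$ except for the single non-positive logistic term $-\beta_E F E/K$ appearing in $\dot E$. Trajectories starting in $\mathcal{D}$ are therefore dominated componentwise by the flow of the linear Metzler system with matrix
\[
A = \begin{pmatrix}
-(\delta_E+\nu_E) & 0 & 0 & \beta_E & 0 \\
(1-\nu)\nu_E & -\delta_M & 0 & 0 & 0 \\
\nu\nu_E & 0 & -a & 0 & 0 \\
0 & 0 & \dfrac{\eta_1}{1+\theta} & -\delta_F & 0 \\
0 & 0 & \dfrac{\eta_2\theta}{1+\theta} & 0 & -\delta_U
\end{pmatrix}, \qquad a := \dfrac{\Delta\eta + (1+\theta)(\eta_2+\delta_Y)}{1+\theta},
\]
so it is enough to produce an explicit exponentially decreasing linear Lyapunov function for this majorant. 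The indeterminacy of \eqref{eq:backeq} at $M = 0$ is removable after the substitution $M_s = \theta M$, which makes the closed-loop field locally Lipschitz on $\mathcal{D}$; Filippov solutions therefore reduce to classical ones, and the positive invariance of $\mathcal{D}$ follows from the usual sign inspection of the vector field on the boundary.

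The main step is to seek $V(x) = \alpha_1 E + \alpha_2 M + \alpha_3 Y + \alpha_4 F + \alpha_5 U$ with strictly positive weights $\alpha_i$ such that $\dot V \leq -c V$ along \eqref{eq:backeq}. Differentiating, using $1 - E/K \leq 1$, and grouping by components yields five scalar coefficient inequalities. The ones for $M$ and $U$ are automatically satisfied as soon as $c \leq \min(\delta_M, \delta_U)$, independently of $\alpha_2, \alpha_5$. The three remaining ones, for $E$, $Y$, $F$, after letting $\alpha_2, \alpha_5$ be arbitrarily small, decouple and can be multiplied together to give the single scalar condition
\[
(\delta_E + \nu_E - c)(a - c)(\delta_F - c) > \frac{\beta_E \eta_1 \nu \nu_E}{1+\theta}.
\]
At $c = 0$ this is exactly the assumption $\mathcal{R}(\theta) < 1$, since $(1+\theta)a = \Delta\eta + (1+\theta)(\eta_2 + \delta_Y)$. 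By continuity the strict inequality persists on some interval $(0, c_\star)$, and I expect $c$ defined by \eqref{def-c>0} to be an explicit point of that interval. Given such a $c$, concrete weights are obtained by normalising $\alpha_4 = 1$, setting $\alpha_1 = \beta_E/(\delta_F - c)$, picking $\alpha_3$ inside the non-empty interval cut out by the $Y$- and $E$-inequalities, and finally choosing $\alpha_2, \alpha_5$ small enough to absorb their perturbations of those inequalities.

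From $\dot V \leq -c V$ and Gronwall, one gets $V(x(t)) \leq e^{-ct} V(x(0))$, which, because $\alpha_i > 0$ and $x(t) \in \mathcal{D}$, turns into a bound of the form $\|x(t)\| \leq K e^{-ct} \|x(0)\|$ for some constant $K>0$ depending only on the weights; this is the claimed global exponential stability in $\mathcal{D}$ with rate at least $c$. The main obstacle I anticipate is not conceptual but algebraic: one must verify that the slack between $\mathcal{R}(\theta)$ and $1$ provided by \eqref{eq:inequalitytheta} is large enough to accommodate the $\alpha_2$ and $\alpha_5$ contributions in the three tight inequalities, while keeping $c > 0$. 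Everything else amounts to standard Gronwall and monotone-systems comparison.
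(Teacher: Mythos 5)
Your proposal follows essentially the same route as the paper: a weighted-$L^1$ (linear) Lyapunov function on the positive orthant, with the logistic term $-\beta_E FE/K$ simply discarded as non-positive, and the closed-loop ratios frozen at $1/(1+\theta)$ and $\theta/(1+\theta)$. The difference is that the paper exhibits the weights explicitly --- see \eqref{def-V}, with the $U$-weight $\sigma$ tuned as in \eqref{def-sigma} so that the $Y$-terms recombine --- and then reads off the decay constant \eqref{def-c>0} as a minimum of coefficient ratios, whereas you argue the existence of admissible weights through the chained coefficient inequalities, their product condition
\begin{equation*}
(\delta_E+\nu_E-c)\,(a-c)\,(\delta_F-c)>\frac{\beta_E\eta_1\nu\nu_E}{1+\theta},
\end{equation*}
its equivalence with $\mathcal{R}(\theta)<1$ at $c=0$, and continuity. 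That equivalence is checked correctly, the reduction of the $M$- and $U$-rows to $c\le\min(\delta_M,\delta_U)$ is right, and the Metzler-comparison preamble, while not needed once you run the Lyapunov computation directly with $1-E/K\le 1$, is harmless. Two points need repair. First, a sign/inversion slip: with $\alpha_4=1$ the $F$-row requires $\alpha_1\beta_E\le \delta_F-c$, so the saturating choice is $\alpha_1=(\delta_F-c)/\beta_E$, not $\beta_E/(\delta_F-c)$; with that correction the admissible interval for $\alpha_3$ is non-empty exactly under the product condition, and the rest of your scheme (then taking $\alpha_2,\alpha_5$ small) goes through. Second, and more substantively with respect to the statement as written, your argument only yields global exponential stability at \emph{some} positive rate $c\in(0,c_\star)$; the claim that the specific constant $c$ of \eqref{def-c>0} is admissible is only ``expected'' in your write-up and is not verified, and it cannot be extracted from a soft continuity argument --- it comes out of the paper's explicit weights, for which $\dot V\le -cV$ holds term by term. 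So either carry out the explicit-weight computation (as the paper does) or restate the conclusion with your own explicit $c_\star$; as it stands, the quantitative part of the proposition is left unproven.
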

{\bf Proof.}
We apply Lyapunov's second theorem. To do so, we define
 $V: [0,+\infty)^5\rightarrow\RR_+$, $x\mapsto V(x)$,
\begin{equation}
\label{def-V}
	V(x) := \frac{(1+2\mathcal{R}(\theta))\nu\nu_E}{(\nu_E+\delta_E)(1-\mathcal{R}(\theta))} E
	 + \nu M + \frac{3\mathcal{R}(\theta)}{(1-\mathcal{R}(\theta))}Y + \frac{(2+\mathcal{R}(\theta))\beta_E\nu\nu_E}{\delta_F(\nu_E+\delta_E)(1-\mathcal{R}(\theta))} F + \sigma U,
\end{equation}
where $\sigma>0$ is a constant, that we will choose later.

\noindent As \eqref{eq:inequalitytheta} holds,
$ V$ is of class $\mathcal{C}^1$,
$V(x)>V((0,0,0,0,0)^T)=0, \;\forall x \in [0, +\infty)^5\setminus\{(0,0,0,0,0)^T\},$
$V(x)\to +\infty$  when $\norm{x}\to +\infty$ with $x\in\mathcal{D}$
and
\begin{multline*}
	\dot{V}(x) = -\frac{\beta_E\nu\nu_E}{(\nu_E+\delta_E)} F-\nu\delta_M M
	-\frac{(1+2\mathcal{R}(\theta))\nu\nu_E}{(\nu_E+\delta_E)(1-\mathcal{R}(\theta))} \frac{\beta_E}{K} FE \\
	- \nu^2\nu_EE-\frac{\eta_1\beta_E\nu\nu_E}{\delta_F(\nu_E+\delta_E)(1+\theta)}Y-\frac{\sigma\eta_2}{1+\theta} Y +\sigma\eta_2 Y-\sigma\delta_UU.
\end{multline*}
By choosing 
\begin{equation}
\label{def-sigma}
\sigma := \frac{\eta_1\beta_E\nu\nu_E\mathcal{R}(\theta)}{(1+\theta)\eta_2(\nu_E+\delta_E)\delta_F}
\end{equation}
we get
\begin{multline*}
	\dot{V}(x) = -\frac{\beta_E\nu\nu_E}{(\nu_E+\delta_E)} F-\nu\delta_M M
	-\frac{(1+2\mathcal{R}(\theta))\nu\nu_E}{(\nu_E+\delta_E)(1-\mathcal{R}(\theta))} \frac{\beta_E}{K} FE \\
	- \nu^2\nu_EE-\frac{\eta_1\beta_E\nu\nu_E(1+\theta(1-\mathcal{R}(\theta)))}{\delta_F(\nu_E+\delta_E)(1+\theta)^2}Y-\sigma\delta_UU.
\end{multline*}
and using once more \eqref{eq:inequalitytheta}, we get
\begin{align}
\label{dotV<0}
    \dot{V}(x) \leq -c V(x), \; \forall x \in [0, +\infty)^5,
\end{align}
with
\begin{multline}
\label{def-c>0}
c:=\min\left\{\frac{\nu(\nu_E+\delta_E)(1-\mathcal{R}(\theta))}{(1+2\mathcal{R}(\theta))},\delta_M,
\frac{\delta_F(1-\mathcal{R}(\theta))}{2+\mathcal{R}(\theta)}, \delta_U,\right.
\\
\left.
\frac{\eta_1\beta_E\nu\nu_E(1+\theta(1-\mathcal{R}(\theta)))}{\delta_F(\nu_E+\delta_E)(1+\theta)^2}\frac{(1-\mathcal{R}(\theta))}{3\mathcal{R}(\theta)} \right\}>0.
\end{multline}
This concludes the proof of Proposition~\ref{prop-case-thetaM-GAS}.
\hfill $\Box$\\
\begin{remark}
	 When the Allee effect is included in 
	the model (for instance \cite[Eq (2.5), Page 25]{strugarek2019use}), 
	the control $M_s = \theta M$ can still be used,
	 and the proof of the stability result can still be done using 
	 the same Lyapunov function \eqref{def-V}.
\end{remark}
 We  define
\begin{align}
	&\phi := \frac{(2+\mathcal{R}(\theta))\eta_1\beta_E\nu\nu_E-3\mathcal{R}(\theta)\Delta\eta\delta_F(\nu_E+\delta_E)}{\delta_F(\nu_E+\delta_E)(1-\mathcal{R}(\theta))(1+\theta)} - \frac{\eta_1\beta_E\nu\nu_E\mathcal{R}(\theta)}{(1+\theta)^2(\delta_E+\nu_E)\delta_F},\\&
Q := 3(\eta_2+\delta_Y)(1+\theta)(\nu_E+\delta_E)\delta_F-(1-\mathcal{R}(\theta))\eta_1\beta_E\nu\nu_E,
\end{align}
and for $\alpha>0$,  the map $G: \mathcal{N}:=[0,+\infty)^6\rightarrow \RR$, $(x^T,M_s)^T\mapsto G((x^T,M_s)^T)$ by
\begin{multline}
		G((x^T,M_s)^T):=	\frac{\phi Y(\theta M + M_s)^2}{\alpha(M+M_s)(3\theta M + M_s)}
+ \frac{((1-\nu)\nu_E\theta E -\theta \delta_M M)(\theta M +3M_s)}{3\theta M + M_s}
\\ +\delta_sM_s + \frac{1}{\alpha}(\theta M-M_s) \text{ if } M+M_s\not=0,
\end{multline}
\begin{equation}
G((x^T,M_s)^T):=0 \text{ if } M+M_s=0.
\end{equation}

 Finally, let us define the feedback law $u: \mathcal{N}\rightarrow [0,+\infty)$,
 $(x^T,M_s)^T\mapsto u((x^T,M_s)^T)$, by
\begin{align}
		u((x^T,M_s)^T):=\max\left(0,G((x^T,M_s)^T)\right). \label{eq:backcontr}
\end{align}
The global stability result is the following.	
\begin{theorem}
	\label{thm-backstepping}
	Assume that \eqref{eq:inequalitytheta} holds.
	Then $\textbf{0}\in \mathcal{N}$ is globally 
	exponentially stable in $\mathcal{N}$ 
	for system \eqref{eq:S11E1}--\eqref{eq:S11E5}  
	with the feedback law \eqref{eq:backcontr}. The exponential convergence rate is bounded by the
	positive constant  $c_p$ defined by
	\begin{gather}
		c_p: = \min\{c, \frac{1}{\alpha},\delta_M,\frac{Q}{3(1+\theta)\delta_F(\nu_E+\delta_E)},\delta_U\}.
	\end{gather}
\end{theorem}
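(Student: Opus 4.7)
The plan is to perform a backstepping construction around the Lyapunov function $V$ supplied by Proposition~\ref{prop-case-thetaM-GAS}. I introduce the tracking error $z := M_s - \theta M$ and work with the augmented candidate
\begin{equation*}
W(\mathcal{X}) := V(x) + \frac{1}{2\alpha}\,z^{2},
\end{equation*}
which is of class $\mathcal{C}^{1}$, positive definite on $\mathcal{N}$, vanishes only at $\mathbf{0}$, and is radially unbounded. The whole argument reduces to proving that $\dot W \le -c_{p}\,W$ along every Filippov solution of \eqref{eq:closed-loop}; global exponential stability at rate $c_{p}$ then follows from Lyapunov's theorem and the equivalence of $W$ with $\Vert\mathcal{X}\Vert^{2}$ on $\mathcal{N}$.

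First I would decompose $\dot V$ along the true dynamics. The right-hand side $f(x,M_s)$ differs from the virtual right-hand side $f(x,\theta M)$ analysed in Proposition~\ref{prop-case-thetaM-GAS} only through the ratios $\frac{M}{M+M_s}$ and $\frac{M_s}{M+M_s}$ appearing in $\dot Y$, $\dot F$ and $\dot U$. Using the identity $\frac{M}{M+M_s}-\frac{1}{1+\theta}=-\frac{z}{(1+\theta)(M+M_s)}$, one rewrites $\dot V$ on $\{M+M_s>0\}$ as $\dot V_{\mathrm{nom}}(x) + z\,R(x,M_s)$, with $\dot V_{\mathrm{nom}}\le -cV$ by Proposition~\ref{prop-case-thetaM-GAS} and $R$ collecting the $Y$-weighted correction factors having denominator $M+M_s$; the constant $\phi$ is chosen precisely so as to absorb the $Y$-prefactor of $R$. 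Then I would form $\dot W = \dot V + \frac{1}{\alpha} z\,\dot z$, with $\dot z = u - \delta_s M_s - \theta\bigl((1-\nu)\nu_E E - \delta_M M\bigr)$, and substitute $u = G$ on the open set $\{M+M_s>0\}\cap\{G>0\}$. The four summands composing $G$ are engineered so that: the $\phi Y(\theta M + M_s)^{2}$ piece cancels $zR(x,M_s)$; the $\delta_s M_s$ piece cancels the natural decay of $M_s$ inside $\dot z$; the piece proportional to $(\theta M + 3M_s)/(3\theta M + M_s)$ balances the $\theta\dot M$ contribution (its weight is identically $1$ on the invariant manifold $\{z=0\}$ and generates only a controllable $z^{2}$-residual off the manifold); and the proportional term $\frac{1}{\alpha}(\theta M - M_s)$ produces the negative definite contribution $-\frac{1}{\alpha}z^{2}$. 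After these cancellations the residual coefficient of $Y$ reduces to a positive multiple of $Q/[3(1+\theta)\delta_F(\nu_E+\delta_E)]$, and the combined estimate reads $\dot W \le -c_{p}\,W$ with $c_{p}$ as announced.

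The saturation $u=\max(0,G)$ is then handled by noting that on $\{G<0\}$ one necessarily has $z>0$, because the other three summands of $G$ are non-negative on $\mathcal{N}$; replacing $G$ by $0$ can therefore only decrease $z\dot z$ and hence $\dot W$, so the inequality persists. On the thin set $\{M+M_s=0\}$ the equality $z=0$ reduces $W$ to $V$ and the required bound follows directly from Proposition~\ref{prop-case-thetaM-GAS}. Invariance of $\mathcal{N}$ is checked face by face, each component of $H$ being non-negative whenever the corresponding state vanishes. Finally, the feedback is locally Lipschitz outside the measure-zero set $\{M+M_s=0\}\cup\{G=0\}$, and the Filippov regularisation at points of that set is the convex hull of the two neighbouring feedback values; the inequality $\dot W\le -c_{p}\,W$ therefore extends to every selection of the convex hull, giving the announced decay along Filippov solutions.

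The main obstacle is the algebraic verification just sketched: one must check that the intricate formula for $G$ performs exactly the cancellations announced and that the residual coefficient of $Y$ matches $Q$ up to the positive constant appearing in $c_{p}$. The delicate point is the third cancellation, where the weight $(\theta M + 3M_s)/(3\theta M + M_s)$ is not simply $1$ and produces a $z^{2}$-residual that must be dominated by the $-\frac{1}{\alpha}z^{2}$ generated by the fourth summand of $G$. A subtler difficulty is the saturation step, where the sign $z>0$ on $\{G<0\}$ must be argued rigorously; it rests on the observation that $u$ enters $\dot W$ only through the factor $z/\alpha$, so clipping $G$ at $0$ is favourable precisely when $z>0$.
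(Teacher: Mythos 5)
Your overall strategy (augment $V$ from Proposition~\ref{prop-case-thetaM-GAS} with a term penalizing the mismatch between $M_s$ and $\theta M$, then show the specific structure of $G$ produces the cancellations) is the right one, but the execution has genuine gaps. First, the Lyapunov function you propose, $W=V+\tfrac{1}{2\alpha}z^2$ with $z=M_s-\theta M$, does not fit the feedback \eqref{eq:backcontr} that the theorem fixes: the weights $\tfrac{(\theta M+M_s)^2}{(M+M_s)(3\theta M+M_s)}$ and $\tfrac{\theta M+3M_s}{3\theta M+M_s}$ inside $G$ are precisely the factors generated by differentiating the rational term $\alpha\tfrac{(\theta M-M_s)^2}{\theta M+M_s}$ (this is the paper's choice of $W$), not by differentiating $\tfrac{1}{2\alpha}z^2$. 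With your $W$ the announced cancellations do not occur exactly; in particular the ``third cancellation'' leaves a residual of the form $\tfrac{2\theta}{\alpha}\,z^2\,\tfrac{(1-\nu)\nu_E E-\delta_M M}{3\theta M+M_s}$, whose coefficient is unbounded in $E$ (take $E$ large, $M,M_s$ small), so it cannot be dominated by the $-z^2/\alpha$ term uniformly on $\mathcal{N}$. Second, your treatment of the saturation is incorrect: $G<0$ does not force the sign of $z$ you claim, because the second summand of $G$ contains $-\theta\delta_M M$ and can be negative; and in the case $M_s\geq\theta M$ with $u=0$, dropping the control is not automatically favourable — this is exactly where the paper needs the standing hypothesis $\delta_s\geq\delta_M$ \eqref{deltas>deltaM} to bound $-\theta\delta_M M(\theta M+3M_s)+\delta_s M_s(3\theta M+M_s)$ from below, an ingredient your argument never uses.

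Third, the set $\{M+M_s=0\}$ cannot be dismissed by ``$W$ reduces to $V$ and Proposition~\ref{prop-case-thetaM-GAS} applies'': on that set the vector field is discontinuous and solutions starting with $M(0)=M_s(0)=0$ may remain there, so one must argue through the Filippov differential inclusion (with $\kappa(t)\in[0,1]$), deduce $E\equiv F\equiv 0$, split into the sub-cases $Y(0)=0$ and $Y(0)>0$ (the latter forcing $\kappa\equiv 0$), and prove $Q>0$ using both \eqref{eq:biologicalcond} and \eqref{eq:inequalitytheta}. This is precisely where the constants $\delta_U$ and $\tfrac{Q}{3(1+\theta)\delta_F(\nu_E+\delta_E)}$ in the claimed rate $c_p$ come from; your proposal offers no mechanism producing them, and likewise cannot justify the appearance of $\delta_M$ in $c_p$ (which, in the paper, arises from the saturated case via $\delta_s\geq\delta_M$). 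A minor further slip: $V$ is linear, not quadratic, in the states, so $W$ is not equivalent to $\Vert\mathcal{X}\Vert^2$; this is harmless for exponential stability but should be stated correctly.
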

\begin{lemma}\label{see:Lemmaphi}
Assume that \eqref{eq:biologicalcond}  and \eqref{eq:inequalitytheta} hold, then $\phi >0$.
\end{lemma}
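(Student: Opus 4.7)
The plan is to first reformulate $\phi$ in a form where the dependence on $\Delta\eta$ and $\eta_2+\delta_Y$ separates cleanly, and then use $\mathcal{R}_0>1$ to bound the harmful (negative) part by the beneficial (positive) part. Throughout I will abbreviate $r:=\mathcal{R}(\theta)$, $s:=1+\theta$, $A:=\eta_1\beta_E\nu\nu_E$, $B:=\delta_F(\nu_E+\delta_E)$, $\Delta:=\Delta\eta$. From the definition of $\mathcal{R}(\theta)$ in \eqref{eq:Rtheta} I have the identity
\begin{equation*}
A = rB\bigl(\Delta + s(\eta_2+\delta_Y)\bigr),
\end{equation*}
which is the main algebraic ingredient.

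First, I would substitute this identity into the numerator of the first fraction in the definition of $\phi$ and simplify: a direct calculation gives
\begin{equation*}
(2+r)A - 3r\Delta B = -r B\Delta(1-r) + rB s(\eta_2+\delta_Y)(2+r),
\end{equation*}
so that dividing by $B(1-r)s$ and subtracting the second fraction (also rewritten via the same identity) yields
\begin{equation*}
\phi = -\frac{r\Delta\,(s+r)}{s^{2}} + \frac{r(\eta_2+\delta_Y)\bigl(2+2\theta+r\theta+r^{2}\bigr)}{(1-r)s}.
\end{equation*}
Thus, after clearing the positive factor $r/(s^{2}(1-r))$, the statement $\phi>0$ reduces to proving
\begin{equation*}
(\eta_2+\delta_Y)\bigl(2+2\theta+r\theta+r^{2}\bigr)s \;>\; \Delta(1-r)(s+r).
\end{equation*}

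Next, I would translate the hypothesis $\mathcal{R}_0>1$ into usable form. Writing $\mathcal{R}_0 = A/[B(\Delta+\eta_2+\delta_Y)]$ and inserting $A = rB(\Delta+s(\eta_2+\delta_Y))$ gives, after rearrangement,
\begin{equation*}
(\eta_2+\delta_Y)\bigl(rs-1\bigr) \;>\; \Delta(1-r).
\end{equation*}
In particular $rs>1$, and multiplying the above inequality by the positive quantity $s+r$ shows it is enough to establish the purely algebraic inequality
\begin{equation*}
(2+2\theta+r\theta+r^{2})\,s \;\geq\; (rs-1)(s+r).
\end{equation*}

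The main (and in fact only) obstacle is this last elementary inequality, which I expect to hold with margin. A direct expansion of both sides in the variables $(s,r)$ collapses almost all terms, leaving
\begin{equation*}
(2+2\theta+r\theta+r^{2})s - (rs-1)(s+r) = 2s^{2} + s(1-r) + r.
\end{equation*}
Since $s=1+\theta>0$, $r=\mathcal{R}(\theta)\in(0,1)$ by \eqref{eq:inequalitytheta}, each summand is strictly positive, so the difference is positive. Combining this with the consequence of $\mathcal{R}_0>1$ proved above yields $\phi>0$, completing the proof.
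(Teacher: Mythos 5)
Your proof is correct: I checked the exact rewriting of $\phi$ obtained by substituting the identity $\eta_1\beta_E\nu\nu_E=\mathcal{R}(\theta)\,\delta_F(\nu_E+\delta_E)\bigl(\Delta\eta+(1+\theta)(\eta_2+\delta_Y)\bigr)$ coming from \eqref{eq:Rtheta}, the resulting factored form $\phi=\frac{r}{s^2(1-r)}\bigl[(\eta_2+\delta_Y)s(2+2\theta+r\theta+r^2)-\Delta\eta(1-r)(s+r)\bigr]$, the translation of \eqref{eq:biologicalcond} into $(\eta_2+\delta_Y)(rs-1)>\Delta\eta(1-r)$, and the polynomial identity $(2+2\theta+r\theta+r^2)s-(rs-1)(s+r)=2s^2+s(1-r)+r$; all are correct, and together they give $\phi>0$ (the side remark that $rs>1$ is not actually needed, since the chain of inequalities holds regardless of the sign of $rs-1$). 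Your route differs from the paper's in structure. The paper treats the two fractions in $\phi$ separately: it lower-bounds the first one using $\mathcal{R}_0>1$ (i.e., $\eta_1\beta_E\nu\nu_E>\delta_F(\nu_E+\delta_E)(\eta_1+\delta_Y)$), upper-bounds the second one using $\mathcal{R}(\theta)<1$ as an inequality on $\eta_1\beta_E\nu\nu_E$, and then regroups terms until $\phi$ is bounded below by a sum of manifestly nonnegative quantities. You instead use the definition of $\mathcal{R}(\theta)$ as an exact identity (invoking \eqref{eq:inequalitytheta} only for the sign of $1-\mathcal{R}(\theta)$), which turns $\phi>0$ into an equivalent single polynomial inequality in $(\theta,\mathcal{R}(\theta),\Delta\eta,\eta_2+\delta_Y)$, settled by one application of $\mathcal{R}_0>1$ plus an exact identity. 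The paper's estimate-by-estimate argument is shorter to write down, while your reformulation is easier to verify (only one inequality is "lossy"), makes transparent exactly where each of the two hypotheses enters, and shows that under \eqref{eq:biologicalcond} the conclusion holds with an explicit positive margin $ \frac{r(\eta_2+\delta_Y)(2s^2+s(1-r)+r)}{s^2(1-r)}$.
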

{\bf Proof.}\\
Let us define $\phi_1  := \frac{(2+\mathcal{R}(\theta))\eta_1\beta_E\nu\nu_E-3\mathcal{R}(\theta)\Delta\eta\delta_F(\nu_E+\delta_E)}{\delta_F(\nu_E+\delta_E)(1-\mathcal{R}(\theta))(1+\theta)} $. We get from the relation \eqref{eq:biologicalcond} that $\eta_1\beta_E\nu\nu_E>\delta_F(\nu_E+\delta_E)(\eta_1+\delta_Y)$. So
\begin{gather}
    \phi_1>\frac{2\eta_1}{(1+\theta)} + \frac{(2+\mathcal{R}(\theta))\delta_Y + 3\mathcal{R}(\theta)\eta_2}{(1-\mathcal{R}(\theta))(1+\theta)}.
\end{gather}
From  relation \eqref{eq:inequalitytheta} we get
$\frac{\beta_E\eta_1\nu\nu_E}{\delta_F(\nu_E+\delta_E)}<\Delta\eta +(1+\theta)(\eta_2+\delta_Y)$. Thus
\begin{align*}
     \phi&>\frac{2\eta_1}{(1+\theta)} + \frac{(2+\mathcal{R}(\theta))\delta_Y + 3\mathcal{R}(\theta)\eta_2}{(1-\mathcal{R}(\theta))(1+\theta)} - \frac{\Delta\eta\mathcal{R}(\theta) +(1+\theta)(\eta_2+\delta_Y)\mathcal{R}(\theta)}{(1+\theta)^2},\\&>\frac{2\eta_1\mathcal{R}(\theta)}{(1+\theta)} + \frac{(2+\mathcal{R}(\theta))\delta_Y + 3\mathcal{R}(\theta)\eta_2}{(1-\mathcal{R}(\theta))(1+\theta)} +\frac{\eta_2\mathcal{R}(\theta) }{(1+\theta)^2}- \frac{\eta_1\mathcal{R}(\theta)}{(1+\theta)^2}-\frac{(\eta_2+\delta_Y)\mathcal{R}(\theta)}{1+\theta},\\
     &>\frac{\eta_1\mathcal{R}(\theta)(1+2\theta)}{(1+\theta)^2} + \frac{2\mathcal{R}(\theta)\eta_2 + 2\delta_Y + \mathcal{R}(\theta)^2(\eta_2+\delta_Y)}{(1-\mathcal{R}(\theta))(1+\theta)} +\frac{\eta_2\mathcal{R}(\theta) }{(1+\theta)^2},\\
     &>0.
\end{align*}

\hfill $\Box$\\
{\bf Proof of Theorem \ref{thm-backstepping}.}
Let  $\alpha>0$ and define $W:\mathcal{N}\rightarrow \RR$ by
\begin{gather}
\label{eq:lyapunov-functionW}
	W((x^T,M_s)^T) :=  V(x) + {\alpha}\frac{(\theta M-M_s)^2}{\theta M + M_s} \text{ if } M+M_s\not = 0,
\\
\label{eq:lyapunov-functionW-M=Ms=0}
W((x^T,M_s)^T) := V(x)  \text{ if } M+M_s = 0.
\end{gather}
We have
\begin{gather}
W \text{ is continuous},
\\
W \text{ is of class $\mathcal{C}^1$ on $\mathcal{N}\setminus \left\{(E,M,Y,F,U, M_s)^T\in \mathcal{N};\; M+M_s=0\right\}$},
\\
\label{Winftyatinfty}
W((x^T,M_s)^T)\to +\infty \;\;\mbox{as} \;\; \norm{x} +M_s\to +\infty,\text{ with $x\in \mathcal{D}$ and $M_s\in [0,+\infty)$},
\\
\label{W>0}
W((x^T,M_s)^T)>W(\textbf{0})=0, \; \forall (x^T,M_s)^T\in \mathcal{N}\setminus\{\textbf{0}\}.
\end{gather}
 From now on, and until the end of this proof, we assume that $(x^T,M_s)^T$ is in $\mathcal{N}$ and until \eqref{dotW<-case-2-2} below we further assume that
\begin{gather}
\label{MMsnotboth01}
(M,M_s)\not =(0,0).
\end{gather}
One has
\begin{equation*}
\begin{array}{rcl}
	\dot{W}((x^T,M_s)^T) &= & \nabla V(x)^T\cdot f(x, M_s)
+\alpha(\theta M-M_s)\frac{2(\theta \dot{M}-\dot{M}_s)(\theta M + M_s)-(\theta\dot{M}+\dot{M_s})(\theta M-M_s)}{(\theta M + M_s)^2}, \\ &= &\nabla V(x)^T\cdot f(x, \theta M) + \nabla V(x)^T\cdot(f(x,M_s)-f(x,\theta M))
\\ && + \alpha(\theta M-M_s)\frac{\theta\dot{M}(\theta M + 3M_s)-\dot{M}_s(3\theta M+M_s)}{(\theta M + M_s)^2}.
\end{array}
\end{equation*}
Since
\begin{equation*}
\begin{array}{rcl}
	\nabla V(x)^T\cdot (f(x,M_s)-f(x,\theta M))& =& \begin{pmatrix}\frac{(1+2\mathcal{R}(\theta))\nu\nu_E}{(\nu_E+\delta_E)(1-\mathcal{R}(\theta))}\\
	  \nu \\  \frac{3\mathcal{R}(\theta)}{(1-\mathcal{R}(\theta))}\\  \frac{(2+\mathcal{R}(\theta))\beta_E\nu\nu_E}{\delta_F(\nu_E+\delta_E)(1-\mathcal{R}(\theta))} \\ \frac{\eta_1\beta_E\nu\nu_E\mathcal{R}(\theta)}{(1+\theta)\eta_2(\nu_E+\delta_E)\delta_F}
\end{pmatrix}
\cdot
\begin{pmatrix}
0\\ 0\\  -\frac{\Delta\eta ( \theta M-M_s)}{(M +M_s)(1+\theta)}Y\\\frac{\eta_1 ( \theta M-M_s)}{(M +M_s)(1+\theta)}Y\\-\frac{\eta_2 ( \theta M-M_s)}{(M +M_s)(1+\theta)}Y
\end{pmatrix}
\\
\\
&=&\displaystyle \frac{\phi Y (\theta M-M_s)}{M+M_s},
\end{array}
\end{equation*}
\begin{eqnarray}	
\dot{W}((x^T,M_s)^T)&= & \nabla V(x)^T\cdot f(x, \theta M) + \alpha\frac{(\theta M-M_s)}{(\theta M + M_s)^2}
\nonumber
\\&& \Big[\frac{(\nabla V(x)\cdot(f((x^T,M_s)^T)-f(x,\theta M)))(\theta M + M_s)^2}{\alpha(\theta M-M_s)}\nonumber
\\&& +\theta \dot{M}(\theta M +3M_s)-\dot{M}_s(3\theta M + M_s)\Big]\nonumber\\ &=&
\dot{V}(x) + \alpha\frac{(\theta M-M_s)}{(\theta M + M_s)^2}\Big[ \frac{\phi Y(\theta M + M_s)^2}{\alpha(M+M_s)}\\&& + ((1-\nu)\nu_E\theta E -\theta \delta_M M)(\theta M +3M_s)\nonumber
\\&&-u(3\theta M + M_s)+\delta_s{M}_s(3\theta M + M_s)\Big].
\label{eq:dotW=}
\end{eqnarray}
We take $u$ as given by \eqref{eq:backcontr}. Therefore, in the case where
\begin{multline}
\label{psigamma>}
	\frac{\phi Y (\theta M + M_s)^2}{\alpha(M+M_s)} + ((1-\nu)\nu_E\theta E -\theta \delta_M M)(\theta M +3M_s)
\\+
\delta_sM_s(3\theta M + M_s) + \frac{1}{\alpha}(\theta M-M_s)(3\theta M+M_s)>0,
\end{multline}
	\begin{multline*}
		u=\frac{1}{3\theta M + M_s}\Big[	\frac{\phi Y (\theta M + M_s)^2}{\alpha(M+M_s)} + ((1-\nu)\nu_E\theta E -\theta \delta_M M)(\theta M +3M_s)\\
+\delta_sM_s(3\theta M + M_s) + \frac{1}{\alpha}(\theta M-M_s)(3\theta M+M_s)\Big],
	\end{multline*}
which, together with \eqref{eq:dotW=}, leads to
\begin{align}
\label{dotW<-case-1}
	\dot{W}((x^T,M_s)^T)= \dot{V}(x) -\frac{(\theta M-M_s)^2(3\theta M+M_s)}{(\theta M + M_s)^2}.
\end{align}
Otherwise, i.e., if \eqref{psigamma>} does not hold,
\begin{multline}
	\frac{\phi Y (\theta M + M_s)^2}{\alpha(M+\gamma_sM_s)}  + ((1-\nu)\nu_E\theta E -\theta \delta_M M)(\theta M +3M_s) \\
+\delta_sM_s(3\theta M +M_s)
+ \frac{1}{\alpha}(\theta M-M_s)(3\theta M+M_s)\leq 0,\label{eq:inequalityueu}
\end{multline}
so, by \eqref{eq:backcontr},
\begin{gather}
\label{u=0}
u=0.
\end{gather}
We consider two cases.
If   $\theta M > M_s$  using \eqref{eq:dotW=}, \eqref{eq:inequalityueu} and \eqref{u=0}
\begin{align}
\label{dotW<-case-2-1}
	\dot{W}((x^T,M_s)^T)&\leq  \dot{V}(x) -\frac{(\theta M-M_s)^2(3\theta M+M_s)}{(\theta M + M_s)^2},\nonumber
    \\&\leq -c V(x) -\frac{(\theta M-M_s)^2}{\theta M + M_s},\nonumber
    \\&\leq- c_1 W((x^T,M_s)^T),
\end{align}
with
\begin{gather}\label{c1}
    c_1:=\min\{c, \frac{1}{\alpha}\}>0.
\end{gather}

Otherwise, ff $\theta M \leq  M_s$,
using once more \eqref{eq:dotW=} and \eqref{u=0}
\begin{multline}
\dot{W}((x^T,M_s)^T) =\dot{V}(x) + \alpha\frac{(\theta M-M_s)}{(\theta M + M_s)^2}\Big[ \frac{\phi Y(\theta M + M_s)^2}{\alpha(M+M_s)}  \\
+ \theta ((1-\nu)\nu_EE-\delta_M M)(\theta M +3M_s)\\+\delta_s{M}_s(3\theta M + M_s)\Big].
\end{multline}
\begin{multline}
\label{case-2-dotWdecom}
\dot{W}((x^T,M_s)^T) =\dot{V}(x) + \alpha\frac{(\theta M-M_s)}{(\theta M + M_s)^2}\Big[ \frac{\phi Y(\theta M + M_s)^2}{\alpha(M+M_s)}  
+ \theta ((1-\nu)\nu_EE\Big] \\+\alpha\frac{(\theta M-M_s)}{(\theta M + M_s)^2}\Big[ -\delta_M M(\theta M +3M_s)+\delta_s{M}_s(3\theta M + M_s)\Big].
\end{multline}
From  Lemma \ref{see:Lemmaphi} we deduce that $\phi>0$ and as $(x^T,M_s)^T\in \mathcal{N}$, one has  $\frac{\phi Y(\theta M + M_s)^2}{\alpha(M+M_s)}  
+ \theta (1-\nu)\nu_EE\geq 0$.
 \begin{eqnarray}
	\text{So},\;\;\theta M -M_s\leq 0\implies \alpha\frac{(\theta M-M_s)}{(\theta M + M_s)^2}\Big[ \frac{\phi Y(\theta M + M_s)^2}{\alpha(M+M_s)}  
	+ \theta (1-\nu)\nu_EE\Big]\leq 0.
\end{eqnarray}
Equation \eqref{case-2-dotWdecom} becomes
\begin{eqnarray}
\label{case-2-dotW}
\dot{W}((x^T,M_s)^T) \leq \dot{V}(x) +\alpha\frac{(\theta M-M_s)}{(\theta M + M_s)^2}\Big[ -\theta\delta_M M(\theta M +3M_s)+\delta_s{M}_s(3\theta M + M_s)\Big].
\end{eqnarray}
The inequality \eqref{deltas>deltaM} gives $\delta_s\geq \delta_M$ and one has
\begin{eqnarray}\label{deltaMthetaIneq}
	-\theta \delta_M M(\theta M +3M_s)+\delta_s{M}_s(3\theta M + M_s) &\geq& -\theta\delta_M M(\theta M +3M_s)+\delta_M{M}_s(3\theta M + M_s) \nonumber\\&\geq& \delta_M(M_s-\theta M)(M_s+\theta M).
\end{eqnarray}
 \eqref{deltaMthetaIneq} together with $\theta M-M_s\leq 0$, implies that
\begin{align}
\label{dotW<-case-2-2}
		\dot{W}((x^T,M_s)^T) &\leq \dot{V}(x) - \alpha\delta_M\frac{(\theta M-M_s)^2}{(\theta M + M_s)},\nonumber
   \\ &\leq -c_2 W((x^T,M_s)^T),
\end{align}
with
\begin{gather}\label{c2}
   c_2:=\min\{c, \delta_M\}>0.
\end{gather}
Let us now deal with the case where \eqref{MMsnotboth01} is not satisfied.
Note that, for every $\tau\geq0$,  $M(\tau)+M_s(\tau)>0 $ implies that $M(t)+M_s(t)>0$ for all
$t\geq \tau$. Thus, if  $M(0)+M_s(0)=0$,  there exists $t_s\in[0,+\infty]$ such that $M(t)+M_s(t)=0$ if and
only if $t\in [0,t_s]\setminus\{+\infty\}.$  Let us  study  only the case $t_s\in (0,+\infty)$ (the case $t_s=0$ is obvious and the case $t_s=+\infty$ is a corollary of our study of the case $t_s\in (0,+\infty)$).
Let us first point out that, for every $(M,M_s)^T\in [0,+\infty)^2$ such that $M+M_s>0$, one has
\begin{align*}
	&\frac{M}{M+M_s}\leq 1,\\ &(\theta M+M_s)^2\leq (3\theta M+M_s)^2\; \text{and}\; \frac{(\theta M+M_s)^2}{(M+M_s)(3\theta M+M_s)} \leq \frac{(3\theta M+M_s)}{M+M_s} \leq 3\theta +1, 
	\\&\frac{\theta M +3M_s}{3\theta M + M_s}= \frac{\theta M}{3\theta M + M_s}+\frac{3M_s}{3\theta M + M_s}\leq \frac{1}{3}+3\leq 4.
\end{align*}
So
\begin{equation}
\label{ineqMMs}
\frac{M }{M+M_s}\in [0,1], \;\frac{(\theta M +M_s)^2}{(M+M_s)(3\theta M+M_s)}\in [0, 3\theta +1], \text{  and }
\frac{\theta M +3M_s}{3\theta M + M_s}\in [0, 4].
\end{equation}

Let  $t\mapsto \mathcal{X}(t)=(E(t),M(t),Y(t),F(t), U(t), M_s(t))^T$  be a solution (in the Filippov sense)
of the closed-loop system \eqref{eq:S11E1}--\eqref{eq:S11E5} such that, for some  $t_s\in (0,+\infty)$
\begin{equation}
\label{M+Ms=0ts}
M(t)+M_s(t)=0,\; \forall t\in [0,t_s].
\end{equation}
Note that \eqref{M+Ms=0ts} implies that
\begin{equation}
\label{M=Ms=0ts}
M(t)=M_s(t)=0,\; \forall t\in [0,t_s].
\end{equation}

  From \eqref{ineqMMs}, \eqref{M=Ms=0ts} and the definition of a Filippov solution, one has on $(0,t_s)$
    \begin{align}
    \label{eq-Filippov}
        \left(\begin{array}{ccccc}
			\dot{E}\\\dot{{M}}\\\dot{{Y}}\\\dot{{F}}\\\dot{{U}}\\\dot{{M}}_s
		\end{array}\right) = \begin{pmatrix}
			\beta_E {F}(1-\frac{E}{K}) - \big( \nu_E + \delta_E \big) {E}
			\\  (1-\nu)\nu_E E - \delta_M M
			\\\nu\nu_E E-\kappa(t)\Delta\eta Y-(\eta_2+\delta_Y)Y
			\\  \eta_1 Y \kappa(t) - \delta_F {F}
             \\ \eta_2(1-\kappa(t)) Y - \delta_U U
			\\ {Yg_1(t)+Eg_2(t)} -\delta_s{M}_s
		\end{pmatrix}
    \end{align}
with
\begin{equation}
\kappa(t)\in [0,1],\; g_1(t)\in \frac{\phi}{\alpha}[0,3\theta+1] \text{ and } g_2(t)\in
	(1-\nu)\nu_E\theta [0,4].
\end{equation}
 From \eqref{M=Ms=0ts} and the second line of \eqref{eq-Filippov}, one has
\begin{equation}
\label{E=0ts}
E(t)=0, \; \forall t \in [0,t_s].
\end{equation}
 From the first line of \eqref{eq-Filippov} and  \eqref{E=0ts}, we get 
\begin{equation}
\label{F=0ts}
F(t)=0, \; \forall t \in [0,t_s].
\end{equation}

Let us first consider the case where $Y(0) = 0$. Then, from the third line of \eqref{eq-Filippov} and \eqref{E=0ts}, one has
\begin{equation}
\label{Y=0ts}
Y(t)=0, \; \forall t \in [0,t_s].
\end{equation} 
To summarize, from \eqref{M=Ms=0ts},  the fifth line of \eqref{eq-Filippov}, \eqref{E=0ts}, \eqref{F=0ts} and \eqref{Y=0ts}
\begin{equation}
\label{Five=0ts}
E(t)=M(t)=Y(t)=F(t)=M_s(t)=0 \text{ and }\dot U(t)=-\delta_UU(t),\;\forall t \in [0,t_s],
\end{equation} 
which, with \eqref{def-V}, \eqref{def-c>0} and \eqref{eq:lyapunov-functionW-M=Ms=0}, gives
\begin{gather}
\label{dotWleq-cW}
\dot W(t)=-\sigma\delta_UU(t)\leq -\delta_U W(t), \; \forall t \in [0,t_s].
\end{gather}

Let us finally consider the case where $Y(0) >0 $. Then, from the third line of \eqref{eq-Filippov},
\begin{equation}
\label{Y>0ts}
Y(t)>0, \; \forall t \in [0,t_s],
\end{equation} 
which, together with the fourth line of \eqref{eq-Filippov} and \eqref{F=0ts}, implies
\begin{equation}
\label{kappa0ts}
\kappa(t)=0, \; \forall t \in [0,t_s].
\end{equation} 
To summarize, from \eqref{M=Ms=0ts},  the third and the fifth line of \eqref{eq-Filippov}, \eqref{E=0ts}, \eqref{F=0ts} and \eqref{kappa0ts},
\begin{equation*}
\label{Four=0ts}
E(t)=M(t)=F(t)=M_s(t)=0, \; \dot Y(t)=-(\eta_2+\delta_Y)Y(t),\text{ and }\dot U(t)=\eta_2Y -\delta_UU(t),\;\forall t \in [0,t_s],
\end{equation*}
which, with \eqref{def-V}, \eqref{def-sigma}, \eqref{def-c>0} and \eqref{eq:lyapunov-functionW-M=Ms=0}, gives
\begin{gather}
\label{dotWleq-cW-last}
\begin{array}{rcl}
\dot W(t)&=&\displaystyle -(\eta_2+\delta_Y)\frac{3\mathcal{R}(\theta)}{(1-\mathcal{R}(\theta))}Y(t)+\eta_2\sigma Y(t)
-\sigma\delta_UU(t)
\\
&=&\displaystyle -\mathcal{R}(\theta)\left((\eta_2+\delta_Y)\frac{3}{(1-\mathcal{R}(\theta))}
-\frac{\eta_1\beta_E\nu\nu_E}{(1+\theta)(\nu_E+\delta_E)\delta_F}\right)Y(t)-\sigma\delta_UU(t)
\\&=& \displaystyle -\mathcal{R}(\theta)\left(\frac{Q}{(1-\mathcal{R}(\theta))(1+\theta)(\nu_E+\delta_E)\delta_F}
\right)Y(t)-\sigma\delta_UU(t)
\end{array}
\end{gather}
where 
\begin{gather}
    Q := 3(\eta_2+\delta_Y)(1+\theta)(\nu_E+\delta_E)\delta_F-(1-\mathcal{R}(\theta))\eta_1\beta_E\nu\nu_E.
\end{gather}
To end the proof we have to prove that $Q >0$.
Using the relation \eqref{eq:Rtheta}  and \eqref{eq:inequalitytheta} we have
\begin{gather}
	\beta_E\eta_1\nu\nu_E<\mathcal{R}(\theta) \delta_F(\nu_E+\delta_E)\Delta\eta  +  \delta_F(\nu_E+\delta_E)(1+\theta)(\eta_2+\delta_Y).
\end{gather}
Recall that  $\Delta\eta = \eta_1-\eta_2$. One has
\begin{align*}
    Q&= 3(\eta_2+\delta_Y)(1+\theta)(\nu_E+\delta_E)\delta_F-\eta_1\beta_E\nu\nu_E + \mathcal{R}(\theta)\eta_1\beta_E\nu\nu_E\\&> 2(\eta_2+\delta_Y)(1+\theta)(\nu_E+\delta_E)\delta_F-\mathcal{R}(\theta)\Delta\eta(\nu_E+\delta_E)\delta_F + \mathcal{R}(\theta)\eta_1\beta_E\nu\nu_E\\&> 2(\eta_2+\delta_Y)(1+\theta)(\nu_E+\delta_E)\delta_F-\mathcal{R}(\theta)\eta_1(\nu_E+\delta_E)\delta_F + \mathcal{R}(\theta)\eta_1\beta_E\nu\nu_E + \mathcal{R}(\theta)\eta_2(\nu_E+\delta_E)\delta_F.
\end{align*}
From the relation \eqref{eq:biologicalcond}, $\eta_1\beta_E\nu\nu_E>\delta_F(\nu_E+\delta_E)(\eta_1+\delta_Y)$.
\begin{align*}
    Q&> 2(\eta_2+\delta_Y)(1+\theta)(\nu_E+\delta_E)\delta_F-\mathcal{R}(\theta)\eta_1(\nu_E+\delta_E)\delta_F + \mathcal{R}(\theta)\delta_F(\nu_E+\delta_E)(\eta_1+\delta_Y) + \mathcal{R}(\theta)\eta_2(\nu_E+\delta_E)\delta_F\\&>2(\eta_2+\delta_Y)(1+\theta)(\nu_E+\delta_E)\delta_F+ \mathcal{R}(\theta)(\eta_2+\delta_Y)(\nu_E+\delta_E)\delta_F\\&>0.
\end{align*}
We get
\begin{gather}
\dot W(t)\leq -c'W(t), \; \forall t \in [0,t_s],
\end{gather}
where 
\begin{gather}\label{cprime}
    c': = \min\{\frac{Q}{3(1+\theta)\delta_F(\nu_E+\delta_E)},\delta_U\}.
\end{gather}
This proves  Theorem~\ref{thm-backstepping} and gives the global exponential stability.
 From \eqref{c1}, \eqref{c2} and \eqref{cprime} we obtain an estimate
on the exponential decay rate
\begin{gather}\label{cp}
	c_p: = \min\{c, \frac{1}{\alpha},\delta_M,\frac{Q}{3(1+\theta)\delta_F(\nu_E+\delta_E)},\delta_U\}.
\end{gather}
 \hfill $\Box$\\

\subsection{Numerical simulations}
Note that $\eta_1$ represents the natural fertility rate in the mosquito population. Wild males have a shorter maturity time in their life cycle than females. Thus, the fertilization phase is essentially   around the hatching site. Sterile males are artificially released into the intervention region. We denote by $p$ with  ($0\leq p\leq 1$)  the proportion of sterile males that are released. Also, the effective fertilization during the mating could be diminished due to the sterilization, which leads us to assume that the effective mating rate of sterile insects is given by $q\eta_1$ with $0\leq q\leq 1$. Putting together these assumptions we get that the probability for a young female to mate with sterile males is $\frac{\eta_2 M}{M+M_s}$ with
$\eta_2 = pq \eta_1$. For the numerical simulation we take $\eta_1 = 1$ and $\eta_2= 0.7$.
The numerical simulations of
the  dynamics when applying the
feedback  \eqref{eq:backcontr} is
 given in Figure~\ref{fig:simulation1}.
The parameters
we use are given in the following table.

\begin{table}[H]
\caption{Value for the parameters of system \eqref{eq:S11E1}--\eqref{eq:S11E4} (see \cite{anguelov2012mathematical,strugarek2019use}).
		 Units are days$^{-1}$ except for $\nu$.}
	\label{eq:tableparametre}
    \centering
	\setlength{\tabcolsep}{10mm}
	\begin{tabular}{lll}
		\hline
		Parameters  &  Description &Value  \\
		\hline
		$\beta_E$ & Effective fecundity &10\\
		
		$\nu_E$ & Hatching parameter & 0.05\\
		
		$\delta_E$& Mosquitoes in aquatic
		phase death rate &0.03\\
		
		$\delta_F$& Fertilized female death rate & 0.04\\
		
		$\delta_Y$& Young female death rate& 0.04\\
       
		$\delta_M$ & Male death rate &  0.1\\
		
		$\delta_s$ & Sterilized male death rate & 0.12\\
		
		$\nu$ & Probability of emergence & 0.49\\
		\hline
	\end{tabular}
\end{table}
With the parameters
given in Table \ref{eq:tableparametre},
 condition \eqref{eq:inequalitytheta} is $\theta >102,06$. We fix   $K = 21000$
 and we consider the persistence
  equilibrium $z_0 = (E^0, M^0, Y^0, F^0, U^0, M_s^0)$ as initial condition.
  That gives $E^0 = 20 700, M^0 = 5300, Y_0 = 1500, F^0 = 13000
$ and $U^0=M_s^0 = 0$.  We take $\theta =290$ and $\alpha =90$.

\begin{figure}[h]
	\centering
	\begin{subfigure}[H]{0.45\textwidth}
		\centering
     \includegraphics[width=\textwidth]{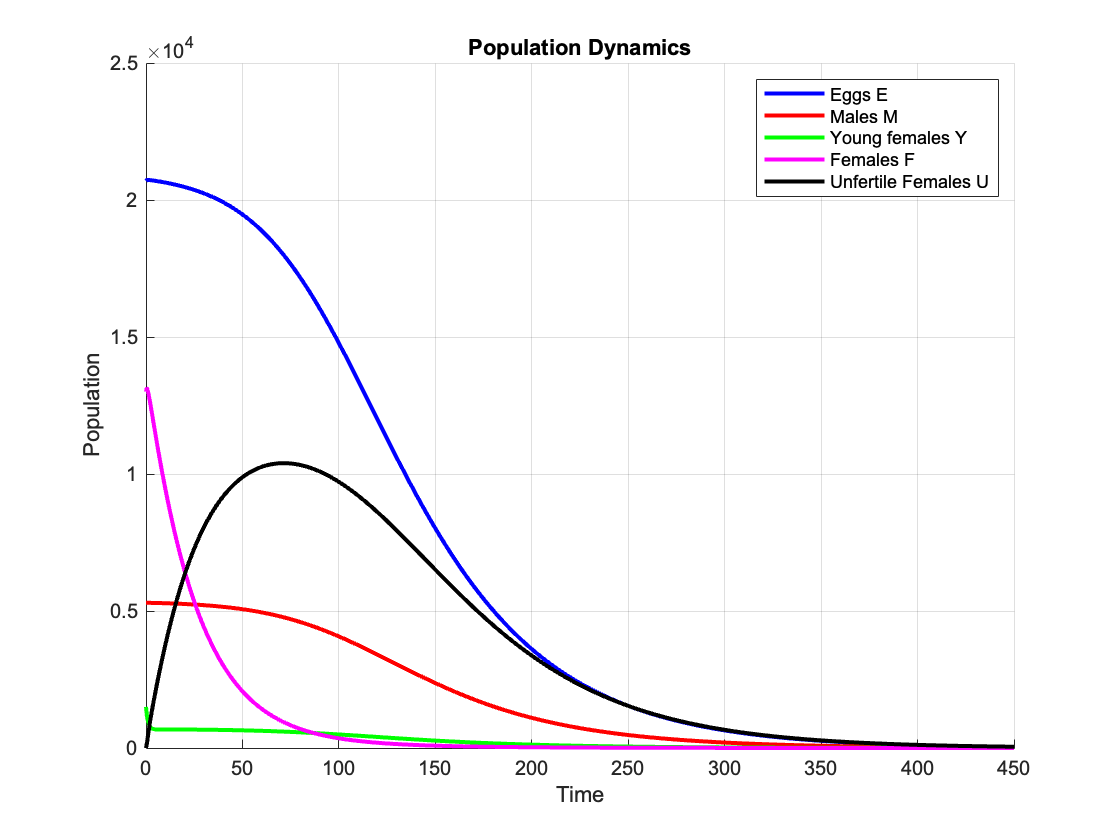}
	    \caption{Evolution of states $E$, $M$, $Y$, $F$ and $U$}  
		\label{fig:evolutionEMF}
	\end{subfigure}
	\hfill
	\begin{subfigure}[H]{0.45\textwidth}
		\centering
	     \includegraphics[width=\textwidth]{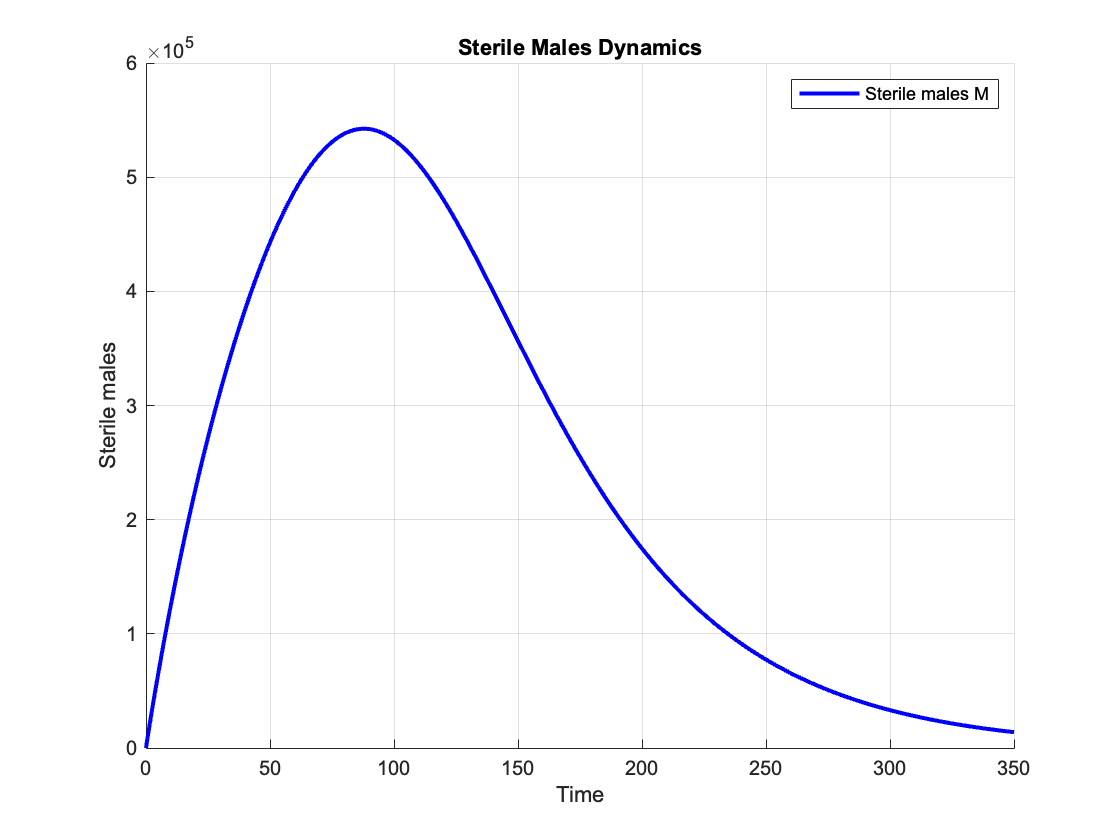}
	    \caption{Evolution of $M_s$ }
		\label{fig:EvolutionMs-1}
	\end{subfigure}
	\hfill
	\begin{subfigure}[H]{0.45\textwidth}
		\centering
		\includegraphics[width=\textwidth]{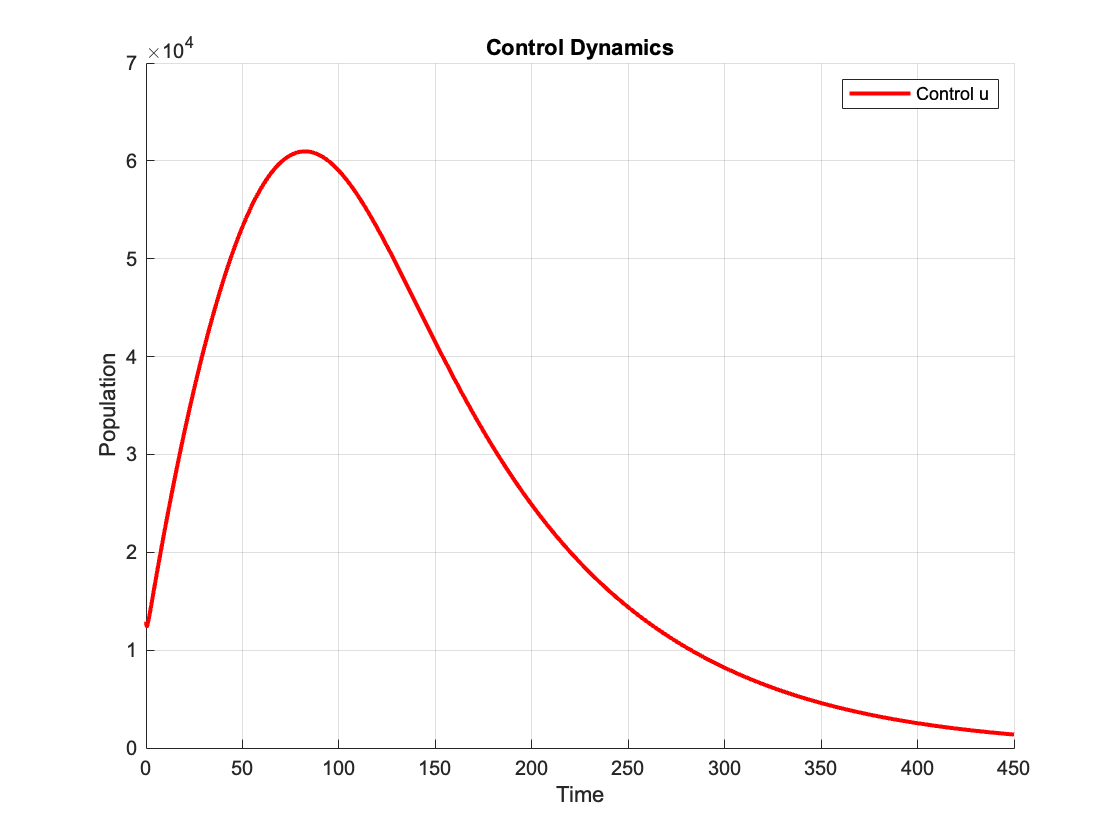}
	    \caption{Evolution of the control function $u$ }
		\label{fig:fiveoverx-1}
	\end{subfigure}
	\vspace{2mm}
	\caption{(a): Plot of $E,M,Y$, $F$ and $U$ when applying the feedback \eqref{eq:backcontr} with  the initial condition $z_0$. (b): Plot of $M_s$. (c): Plot of the feedback control function $u$.
	}
\label{fig:simulation1}
\end{figure}

\begin{remark}\label{see:convergremark}
Note that the feedback satisfies
 \begin{eqnarray}
      \sup_{\varepsilon \to 0}\{ \vert u (\mathcal{X})\vert: \mathcal{X}\in\mathcal{N}, \norm{X}_1\in \mathcal{B}(0,\varepsilon)\} &\longrightarrow 0.
 \end{eqnarray}
 The advantage of applying feedback
 control is that when the density of the target population
 decreases, the control also decreases.
\end{remark}
\begin{remark}
	It is important to note that the backstepping
	feedback control \eqref{eq:backcontr}
	does not depend on the environmental capacity
	$K$, which is also an interesting feature for
	 the field applications. In the case $K=+\infty$, the Eq \eqref{eq:SS11E1} becomes
	 \begin{gather}\label{eq:S11E11}
		\dot E = \beta_E F  -(\delta_E + \nu_E)E,
	 \end{gather}
	  and we prove by 
	 the same process that the same feedback law \eqref{eq:backcontr}, ensures the exponential stability of the SIT
	 system \eqref{eq:S11E11},\eqref{eq:S11E2}--\eqref{eq:S11E6} with the same lower bound of the exponential convergence rate.
\end{remark}
	 Our
	 stabilization result is the following one.
\begin{theorem}
	\label{thm-backsteppingKinfty}
	Assume that \eqref{eq:inequalitytheta} holds and $K=+\infty$.
	Then $\textbf{0}\in \mathcal{N}$ is globally exponentially stable in $\mathcal{N}$ for 
	system \eqref{eq:S11E11}, \eqref{eq:S11E2}--\eqref{eq:S11E6}  
	with the feedback law \eqref{eq:backcontr}. The exponential convergence 
	rate is bounded by $c_p>0$ defined in \eqref{cp}.
\end{theorem}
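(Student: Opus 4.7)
The plan is to observe that the proof of Theorem \ref{thm-backstepping} carries over almost verbatim, and to pinpoint the few places where one must check that the argument still works when the logistic damping term $-\beta_E F E/K$ is absent. The key structural fact is that the feedback law \eqref{eq:backcontr}, the Lyapunov functions $V$ and $W$ defined in \eqref{def-V} and \eqref{eq:lyapunov-functionW}--\eqref{eq:lyapunov-functionW-M=Ms=0}, and the constant $c_p$ in \eqref{cp} are all independent of $K$. Thus one may reuse the same $V$, $W$, $\sigma$, and $\alpha > 0$ for system \eqref{eq:S11E11},\eqref{eq:S11E2}--\eqref{eq:S11E6}.

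The first step is to recompute $\dot V$ along the trajectories of the modified system. With $K = +\infty$ the equation for $E$ loses the $-\beta_E F E/K$ contribution, so the expression for $\dot V$ in the proof of Proposition~\ref{prop-case-thetaM-GAS} loses exactly the term
\begin{equation*}
-\frac{(1+2\mathcal{R}(\theta))\nu\nu_E}{(\nu_E+\delta_E)(1-\mathcal{R}(\theta))} \frac{\beta_E}{K} FE,
\end{equation*}
which was non-positive. Since that term was not used in establishing the bound \eqref{dotV<0}, and since the constant $c$ in \eqref{def-c>0} does not involve $K$, the same inequality $\dot V(x) \leq -c V(x)$ still holds on $[0,+\infty)^5$. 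In particular Proposition~\ref{prop-case-thetaM-GAS} remains valid when $K = +\infty$.

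The second step is to run the proof of Theorem~\ref{thm-backstepping} with this updated $\dot V$. The decomposition \eqref{eq:dotW=} of $\dot W$ is algebraic and depends only on the ODE structure of the $(M, M_s)$ subsystem and on the gradient of $V$; neither of these changes. Consequently the analysis splits into the same three situations (control active, control saturated with $\theta M > M_s$, and control saturated with $\theta M \le M_s$), and in each case the bounds \eqref{dotW<-case-1}, \eqref{dotW<-case-2-1}, \eqref{dotW<-case-2-2} go through with the constants $c_1$ and $c_2$ from \eqref{c1} and \eqref{c2}, since these also do not involve $K$. Lemma~\ref{see:Lemmaphi} is used only through the sign of $\phi$, which again does not depend on $K$.

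The third and final step is the Filippov analysis on the set $\{M + M_s = 0\}$. Here $E(t) = 0$ on $[0, t_s]$ follows from the second line of \eqref{eq-Filippov} exactly as before; with $E \equiv 0$ the equation for $\dot E$ collapses in the same way regardless of whether the logistic term is present, so \eqref{E=0ts}--\eqref{F=0ts} and the subsequent derivations of \eqref{dotWleq-cW} and \eqref{dotWleq-cW-last} are unchanged, producing the constant $c'$ defined in \eqref{cprime}. Combining these estimates yields $\dot W \leq -c_p W$ on $\mathcal{N}$, which together with the properties \eqref{Winftyatinfty}--\eqref{W>0} of $W$ gives the claimed global exponential stability with rate $c_p$. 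No step poses a genuine obstacle; the only thing to verify carefully is that dropping the logistic term merely removes a non-positive contribution to $\dot V$, so all estimates are preserved with identical constants. \hfill $\Box$
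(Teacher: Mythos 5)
Your proposal is correct and follows essentially the same route as the paper, which itself only remarks (just before the theorem) that the proof of Theorem~\ref{thm-backstepping} applies verbatim because the feedback \eqref{eq:backcontr}, the Lyapunov functions $V$ and $W$, and the constants in \eqref{def-c>0}, \eqref{c1}, \eqref{c2}, \eqref{cprime}, \eqref{cp} are all independent of $K$. Your explicit observation that setting $K=+\infty$ merely removes the non-positive term $-\frac{(1+2\mathcal{R}(\theta))\nu\nu_E}{(\nu_E+\delta_E)(1-\mathcal{R}(\theta))}\frac{\beta_E}{K}FE$ from $\dot V$, so every estimate (including the Filippov analysis on $\{M+M_s=0\}$) is preserved with identical constants, is exactly the intended argument.
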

 \begin{remark}\label{see:Robustremark}
     Let us assume that the heterogeneity of the intervention zone strongly impacts the mating of female mosquitoes with sterile males more than we would have estimated. Suppose the estimated mating rate for the control \eqref{eq:backcontr} is $\eta_2^e=0.7$ and  let the mating rate is $\eta_2^r= 0.4$ for the dynamics. Keeping the other parameters and the same initial condition, we obtain the following figure.
 \begin{figure}[h]
	\centering
	\begin{subfigure}[H]{0.3\textwidth}
		\centering
		\includegraphics[width=\textwidth]{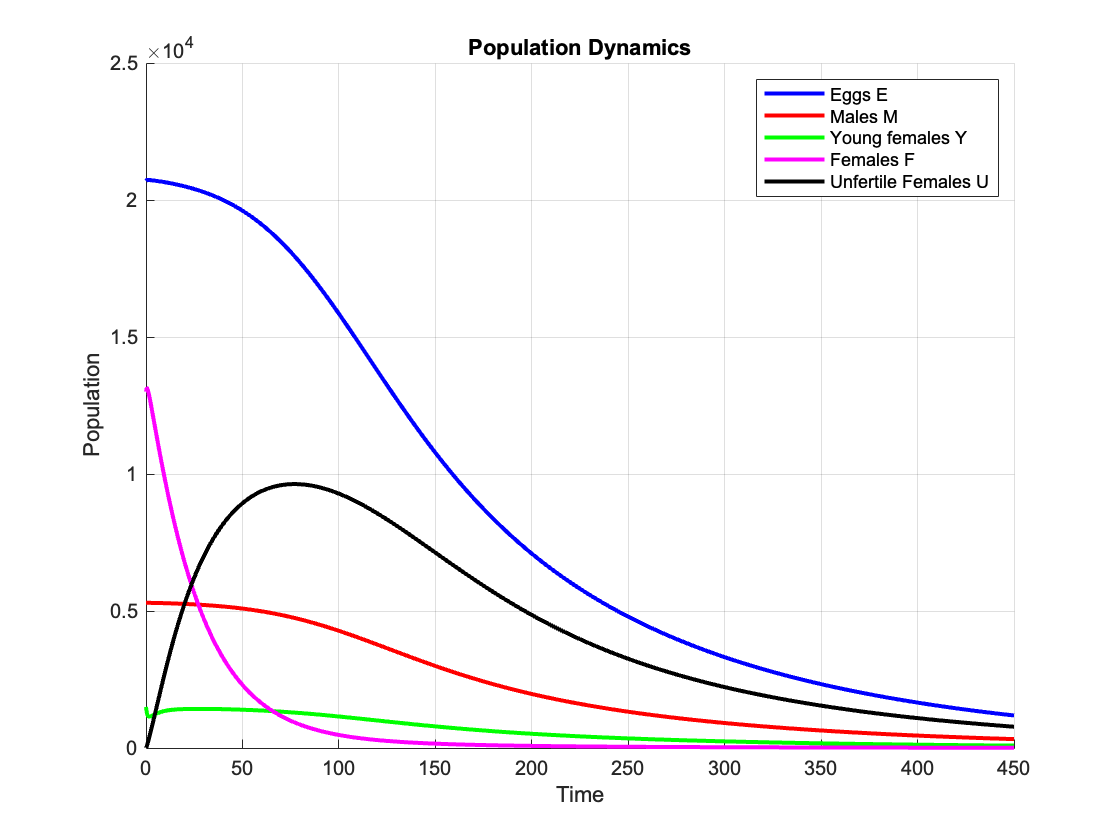}
	    \caption{Evolution of states $E$, $M$, $Y$, $F$ and $U$ } 
		\label{fig:evolutionEMFYU}
	\end{subfigure}
	\hfill
	\begin{subfigure}[H]{0.3\textwidth}
		\centering
	     \includegraphics[width=\textwidth]{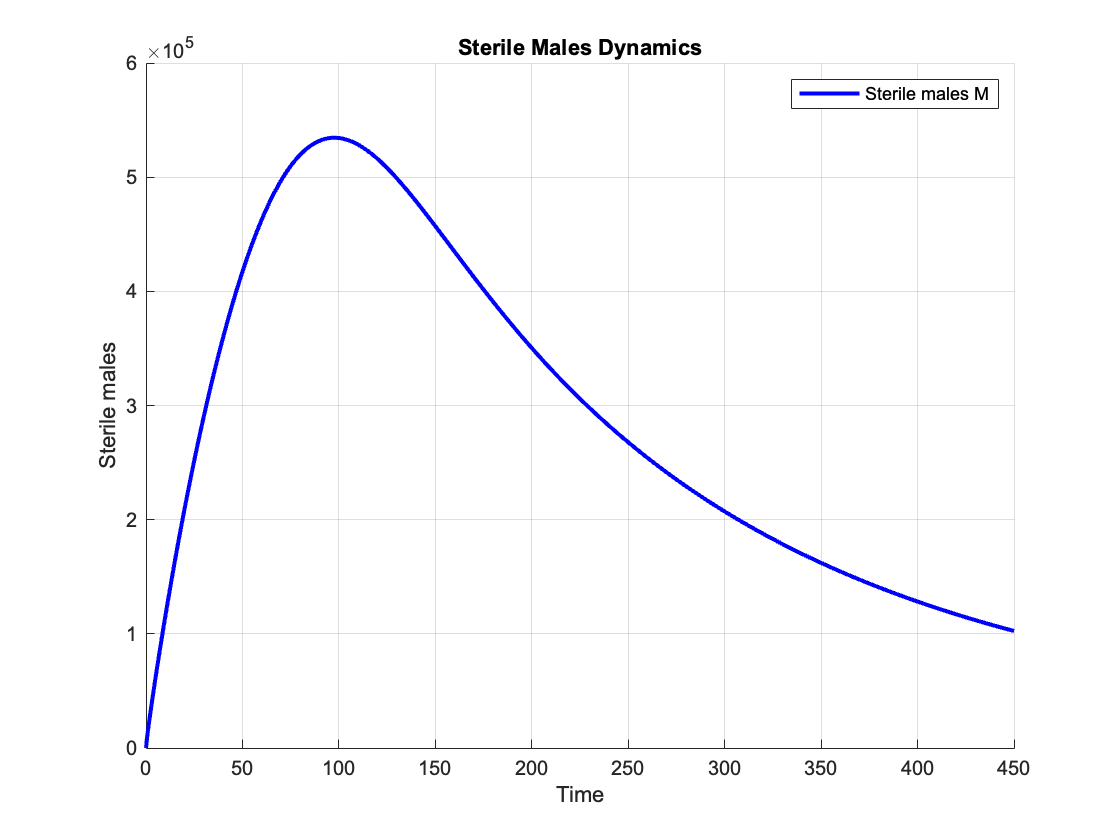}
	    \caption{Evolution of $M_s$ }
		\label{fig:EvolutionMs}
	\end{subfigure}
	\hfill
	\begin{subfigure}[H]{0.3\textwidth}
		\centering
		\includegraphics[width=\textwidth]{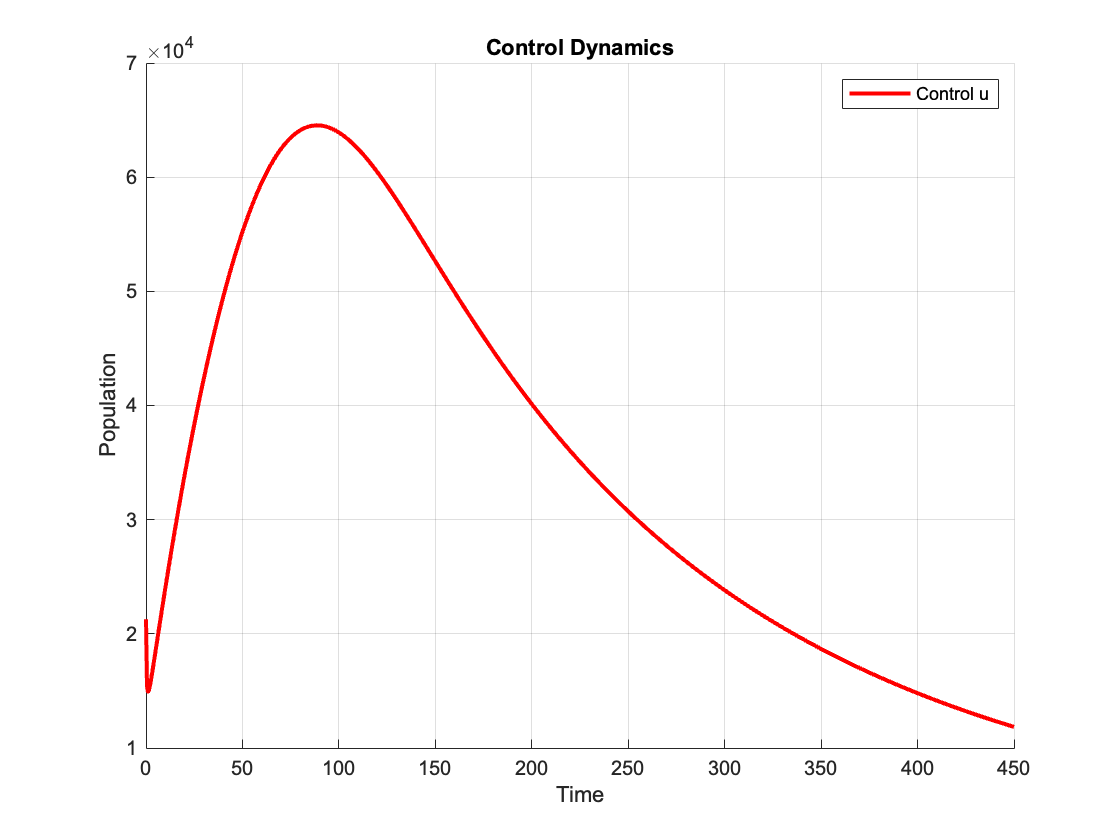}
	    \caption{Evolution of the control function $u$ }
		\label{fig:controletar}
	\end{subfigure}
	\vspace{2mm}
	\caption{(a): Plot of $E,M,Y$, $F$ and $U$ when applying the feedback \eqref{eq:backcontr} with $\eta_2^e = 0.7$ while $\eta_2^r = 0.4$ for the dynamics. (b): Plot of $M_s$. (c): Plot of the feedback control function $u$.
	}
\label{fig:simulationtest}
\end{figure}
This parameter considerably impacts the convergence time of the states of the system. Note that with $e = 3\times 10^{-1}$ of error difference, we still have convergence. Estimation errors of the order of $e = 10^{-2}$ will have a negligible impact on the convergence time. This is because the backstepping control also depends on the states of the system. Thus, the states make a correction that can compensate for a certain margin of error. Unlike control, which only depends on the parameters, estimation errors have no correction from the dynamics. Therefore, this can be fatal to the success of the intervention. In practice, many external factors impact the life cycle of mosquitoes. These factors modify parameters such as birth, hatching, and fertilization rates. These factors are, for example, rainfall and the topography of the region. A SIT model that can integrate these factors is challenging to study (see \cite{spatialSITwork}) . The success of  a SIT intervention depends strongly on the  robustness of the control strategy. The results of our previous test  that is reported in Figure \ref{fig:simulationtest}, show us the advantage feedback control can provide in terms of robustness.
 \end{remark}

\section{Observer design for SIT model}\label{see:Observersection}
The application of feedback control
requires measuring states such as eggs
$E$ and young females $Y$ of
the intervention zone over time.
In practice, it is always important
 to estimate the density
 of adult mosquitoes to
 intervene in an area.
 These data are collected
 using mosquito traps distributed
 throughout the region. Despite various
 technological advances to improve
 these traps, it should be noted that
  some data are still  easier
  to be measured than others. Measuring
   mosquito density in the aquatic phase $E$
    is difficult, specially in a
	heterogeneous area. It is also
	challenging to measure young
	females $Y$ because females come
	in three categories, and we need
	to distinguish between unfertilized
	and fertilized females. Males are more easily
	measured because they are distinguishable.
	It can  be also easy to distinguish wild males
	from laboratory males by marking processes
	applied to laboratory males.
	 In this part of
	  our paper, we will assume that the density
	  of wild males and that of sterile males
	  can be measured continuously. Our objective
	  is to estimate the other densities. Observer design
	  for nonlinear dynamic systems is a technique
	  used in control theory to estimate the states
	  of a system when only partial or indirect measurements
	  are available. The difficulties in dealing with observer
      problems for general  nonlinear systems is  the proof of
	  global convergence of the estimation error.
	  Much literature exists on state observers and
	  filters for nonlinear systems as they play crucial
	  roles in control theory. To simplify the nonlinearity $F(1-\frac{E}{K})$ of 
	  the SIT model, in this section we consider  the simplified SIT model for  environmental  capacity
	  $K=+\infty$. On the one hand, the reason for studying 
	  such a model is that the simplified model can be 
	  considered relevant from a biological point of
	   view within a large intervention domain or 
	   in areas where environmental capacity is difficult to estimate. 
	   On the other hand, based on the result presented in Theorem~\ref{thm-backsteppingKinfty},
	    the proposed feedback law \eqref{eq:backcontr} still stabilizes the simplified model 
	   around zero with the same convergence rate. 
	  We consider the following output control system:
	  \begin{align}
		&\dot E = \beta_E F  -(\delta_E + \nu_E)E,\label{eq:SO11E1}\\
		&\dot M = (1-\nu)\nu_E E - \delta_MM,\label{eq:SO11E2}\\
		&\dot Y = \nu\nu_E E - \frac{\Delta\eta M }{M+M_s} Y -(\eta_1+\delta_Y)Y,\label{eq:SO11E3}\\
		&\dot F = \frac{\eta_1 M }{M+M_s} Y -\delta_FF,\label{eq:SO11E4}\\
      &\dot U = \frac{\eta_2 M_s }{M+M_s}Y -\delta_UU,\label{eq:SO11E5}\\
		&\dot M_s = u -\mu_sM_s,\label{eq:SO11E6}\\
		& y_1 = M, \label{eq:SO11E7}\\
		&y_2 = M_s,\label{eq:SO11E8}
	\end{align}
where the states is $X = (E,M, Y, F,U,M_s)^T\in \mathcal{N}$, the control is $u\in[0,+\infty)$ and the output is $y = (M, M_s)^T\in\RR_+^2$.

In particular, in this model we are confronted with
	  a difficulty in which most observer construction
	  theories are invalid because of the singularity
	  at the origin. To go around this difficulty, we will
	   use the fact that the main nonlinearity term  $\frac{M}{M+M_s} $ is bounded
	  and essentially the most accessible data to measure.
	 This leads us to develop an observer for this type of system.
	\subsection{Observer design for a class of nonlinear systems}\label{see:Observertheory}
The usual observers for linear systems are the
Luenberger observer and the Kalman observer.
Observer design for a nonlinear
 system is a complex problem in control
 theory and has received much attention from many authors yielding a large literature of methods.
Among than, the most famous are the change
of coordinates to transform the nonlinear system into a
linear system
\cite{keller1987non,astolfi2006global,krener1983linearization,bernard2017,bookBoutatZheng}
 and a second approach consists in using the extended Kalman filter (EKF) \cite{boutayeb1999strong,reif1996linearisation,krenerarthur,Valibeygi}.
The state observer is called  an exponential
 state observer if the observer error
 converges exponentially to zero.
 In this section we provide an explicit construction of a
  global observer for  the following system.
	 \begin{equation}
		 \left\{\begin{aligned}
			 &\dot x(t) = A x(t) + B(y(t))x(t)+Du(t),\\& y(t) = Cx(t),
		 \end{aligned}
		 \right.\label{eq:MIMO}
	 \end{equation}
	 where $x(t)\in\RR^n$, is the state vector, $u(t)\in\RR^p$ is the input
	 vector and $y(t)\in\RR^m$ is the output vector. $A\in\RR^{n\times n}$ and $C\in\RR^{m\times n}$
	 are the appropriate matrices. The matrice $B(y(t))$ is in the form
	 \begin{gather}
		 B(y(t)) = \sum_{i,j=1}^{n,n}b_{ij}(y(t))e_n(i)e^T_n(j).
	 \end{gather}
	
		We assume that for all $y(t)\in\RR^m$ the coefficients $b_{ij}$ are bounded for all $i = 1,\cdots,n$ and $j=1,\cdots,n$ and denote
		\begin{align}\label{eq:maxofderivative}
			\overline{b}_{ij} = \max_{t}(b_{ij}(y(t)))\;\;\mbox{and}\;\; \underline{b}_{ij}=\min_{t}(b_{ij}(y(t))).
		\end{align}
		Then, the parameter vector $b(t)$ remains in a bounded convex domain $\mathcal{S}_{n,n}$ of which $2^{(n^2)}$ vertices are defined by:
		\begin{align*}
			\mathcal{V}_{\mathcal{S}_{n,n}} = \{\eta=(\eta_{11},\cdots,\eta_{1n},\cdots,\eta_{nn})| \eta_{ij}\in\{\underline{b}_{ij},\overline{b}_{ij}\}\}.
		\end{align*}
 A state observer corresponding to \eqref{eq:MIMO} is given as follows:
 \begin{equation}
	 \left\{\begin{aligned}
		 &\dot{\hat{x}}(t) = A \hat{x}(t) + B(y(t))\hat{x}+Du(t) - L(C\hat{x}-y(t)),\\& \hat{y}(t) = C\hat{x}(t),
	 \end{aligned}
	 \right.\label{eq:observerMIMO}
 \end{equation}
 where $\hat{x}(t)$ denotes the estimate of the state $x(t)$.
 The dynamics of the observer error $e(t):= \hat{x}(t)-x(t)$ is
	 $\dot{e}(t) = (A-LC)e(t) + B(y(t))e(t) = (A+B(y(t)) - LC) e(t).$
 We define
 \begin{gather}
	 \mathcal{A}(b(t)) = A + \sum_{i,j=1}^{n,n}b_{ij}(y(t))e_q(i)e^T_n(j).
 \end{gather}
 The dynamics of the observer error becomes
 \begin{gather}\label{eq:errorobserverdyn}
	 \dot{e}(t) =(\mathcal{A}(b(t)) - LC) e(t).
 \end{gather}
 The observation problem consists in finding a gain $L$ such that
 \eqref{eq:errorobserverdyn} converges exponentially
 towards zero. We use the following results in \cite{zemouche2005observer}.
 \begin{theorem} 
	\label{see:maintheorem}
	 The observer error converges exponentially towards zero if there exist matrices $P=P^T>0$ and $R$ of appropriate dimensions such
	 that following LMIs are feasible:
	 \begin{align}\label{eq:LMIcondition}
		 \mathcal{A}^T(\eta)P -C^TR+ P\mathcal{A}(\eta) -R^TC + \xi I<0,\\\forall\;\eta\in\mathcal{V}_{\mathcal{S}_{n,n}},
	 \end{align}
	 for some constant $\xi >0$.
 When these $LMIs$ are feasible, the observer gain $L$ is given by $L = P^{-1}R^T$.
 \end{theorem}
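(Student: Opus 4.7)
The plan is to use the quadratic Lyapunov function $V(e)=e^T P e$ for the error dynamics \eqref{eq:errorobserverdyn}, exploit the affine/polytopic structure of $\mathcal{A}(b(t))$ in the parameter vector $b(t)$, and then deduce exponential decay from the LMI condition holding at every vertex of the polytope $\mathcal{S}_{n,n}$.

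First I would substitute $L=P^{-1}R^T$, so that $PL=R^T$ and $(LC)^T P = C^T R$. Differentiating $V(e)=e^T P e$ along \eqref{eq:errorobserverdyn} then gives
\begin{equation*}
\dot V(e) = e^T\bigl[\mathcal{A}(b(t))^T P + P\mathcal{A}(b(t)) - C^T R - R^T C\bigr]e.
\end{equation*}
Next I would invoke the key structural observation: since $\mathcal{A}(b) = A + \sum_{i,j} b_{ij}\,e_n(i)e_n^T(j)$ is affine in the entries of $b$, and since at each time $t$ one has $b_{ij}(y(t))\in[\underline{b}_{ij},\overline{b}_{ij}]$, the vector $b(t)$ lies in the convex hull of the finite vertex set $\mathcal{V}_{\mathcal{S}_{n,n}}$. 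Hence there exist nonnegative coefficients $\lambda_k(t)$ with $\sum_k \lambda_k(t)=1$ such that
\begin{equation*}
\mathcal{A}(b(t)) = \sum_{\eta\in \mathcal{V}_{\mathcal{S}_{n,n}}} \lambda_k(t)\,\mathcal{A}(\eta).
\end{equation*}

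By linearity, the symmetric matrix inside the brackets in $\dot V$ is the same convex combination of the vertex matrices $\mathcal{A}(\eta)^T P + P\mathcal{A}(\eta) - C^T R - R^T C$. By the LMI hypothesis \eqref{eq:LMIcondition} each such vertex matrix is strictly less than $-\xi I$, so the convex combination is also less than $-\xi I$. This yields
\begin{equation*}
\dot V(e) \le -\xi\, \|e\|^2 \le -\frac{\xi}{\lambda_{\max}(P)}\, V(e),
\end{equation*}
from which Gr\"onwall's inequality gives $V(e(t))\le V(e(0))\exp\!\bigl(-\tfrac{\xi}{\lambda_{\max}(P)}t\bigr)$, and in turn exponential convergence of $e(t)$ to zero with rate at least $\tfrac{\xi}{2\lambda_{\max}(P)}$, proving the theorem.

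The only genuinely non-routine step is the polytopic embedding, i.e.\ checking that $\mathcal{A}(b(t))$ really is an affine image of $b(t)$ lying in a convex polytope whose vertices are exactly those listed in $\mathcal{V}_{\mathcal{S}_{n,n}}$; once this is in place, the rest is a standard quadratic Lyapunov computation and convex combination argument. I do not expect any significant obstacle beyond this observation, which follows directly from the defining expansion of $B(y)$ and the boundedness assumption \eqref{eq:maxofderivative}.
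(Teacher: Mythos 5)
Your proposal is correct and follows essentially the same route as the paper: the quadratic Lyapunov function $e^T P e$, the substitution $L=P^{-1}R^T$ turning the error-dynamics inequality into the LMI \eqref{eq:LMIcondition}, and the convexity/vertex argument exploiting the affine dependence of $\mathcal{A}(b(t))$ on $b(t)$. You merely make explicit two steps the paper leaves implicit (the convex-combination decomposition over $\mathcal{V}_{\mathcal{S}_{n,n}}$ and the final Gr\"onwall estimate giving the decay rate $\xi/(2\lambda_{\max}(P))$), which is fine.
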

 {\bf Proof.}
 We follow \cite{zemouche2005observer} and  consider the following quadratic Lyapunov
 function
 \begin{equation}
	 \mathcal{V}(e) = e^TPe,
 \end{equation}
 where $P$ is the matrix in Theorem \ref{see:maintheorem}.
 We have
	 $\dot{\mathcal{V}}(e)(t) = e(t)^TF(b(t))e(t),$
 where
	 $F(b(t))= (\mathcal{A}(b(t)) -LC)^TP +P(\mathcal{A}(b(t)) -LC).$
 For $e(t)\not=0$ the condition $\mathcal{V}(e(t))>0$ is satisfied because $P>0$ and
 the condition $\dot{\mathcal{V}}(e(t))<0$ is satisfied if we have
 \begin{equation}
	 F(b(t))<0\;\;\mbox{for all}\;\;b(t)\in\mathcal{S}_{n,n}.
 \end{equation}
 Since the matrix function $F$ is affine in $b(t)$, using a convexity argument we deduce that $\forall \;t\geq 0$
 \begin{gather}\label{dotU<-U}
	 \dot{\mathcal{V}}(e(t))< - \xi\norm{e(t)}_P^2,
 \end{gather}
 if the following condition is satisfied
	  $F(\eta)< - \xi I,$ $\;\;\forall\;\eta\in {\mathcal{V}_{n,n}}.$
 Thus, if \eqref{eq:LMIcondition} holds, this inequality is also satisfied.
 \hfill $\Box$\\

 \subsection{Application to the SIT model}	
  We rewrite the output SIT models \eqref{eq:SO11E1}--\eqref{eq:SO11E7} as
 \begin{equation}
	 \left\{\begin{aligned}
		 &\dot X = A X + B(y)X+Du,\\& y = CX,
	 \end{aligned}
	 \right.\label{eq:SITMIMO}
 \end{equation}
 where  $X = (E,M,Y,F, U,M_s)^T$,
 \begin{align*}
	 &{A} = \left(\begin{array}{cccccc}-(\delta_E+\nu_E)&0&0&0&\beta_E&0\\(1-\nu)\nu_E&-\delta_M&0&0&0&0\\\nu\nu_E&0&-(\eta_2+\delta_Y)&0&0&0\\0&0&0&-\delta_F&0&0\\ 0&0&0&0&-\delta_U&0\\0&0&0&0&0&-\delta_s
	 \end{array}\right),\\&B(y) = \left(\begin{array}{cccccc}0&0&0&0&0&0\\0&0&0&0&0&0\\0&0&-\Delta\eta\frac{y_1}{y_1+y_2}&0&0&0\\0&0&\eta_1\frac{y_1}{y_1+y_2}&0&0&0\\ 0&0&\eta_2\frac{y_2}{y_1+y_2}&0&0&0\\0&0&0&0&0&0
	 \end{array}\right)\;\\& C = \left( \begin{array}{ccccccc} 0&1&0&0&0&0\\0&0&0&0&0&1
	 \end{array}
	 \right),\;\;\; D = (0,0,0,0,0,1)^T.
 \end{align*}
 As, $\mathcal{N}$  is an invariant set, one has
 $0\leq\frac{y_1}{y_1+y_2}\leq1$.
 Solving the corresponding equation of \eqref{eq:LMIcondition} with $\xi=1$
 in MATLAB, we get

 \begin{align}
	 &P = 10^4\left(\begin{array}{cccccc}
		 0.0219 &  -0.1567 &  -0.1531 &  -0.1703&   -0.0344&         0\\
   -0.1567&    8.9301&   -0.8472&   -0.8081&   -0.4929 &        0\\
   -0.1531&   -0.8472&    4.5716&    0.9277&    1.0845&         0\\
   -0.1703&   -0.8081&    0.9277 &   4.3088&   -2.3012 &        0\\
   -0.0344&   -0.4929&    1.0845 &  -2.3012&    4.7413&         0\\
         0 &        0  &       0 &        0 &        0 &   3.7267
		 \end{array}\right),\;
	 \\&
	 R= 10^3\left(\begin{array}{cccccc}
		0.2352&    0.9704  & -0.4415&   -1.1401 &   0.0690&         0\\
         0 &        0  &       0   &      0    &     0  &  1.4162
		 \end{array}\right),
	 \end{align}
	 \begin{align}\label{gainmatrixL}
		L = \left(\begin{array}{cc}
			50.6342 &        0\\
    1.4150&         0\\
    0.9426  &       0\\
    2.6547 &        0\\
    1.6023  &       0\\
         0  &  0.3800
		 \end{array}\right).
	 \end{align}
	 \begin{figure}[H]
		\centering
		\begin{subfigure}[H]{0.45\textwidth}
			\centering
			\includegraphics[width=\textwidth]{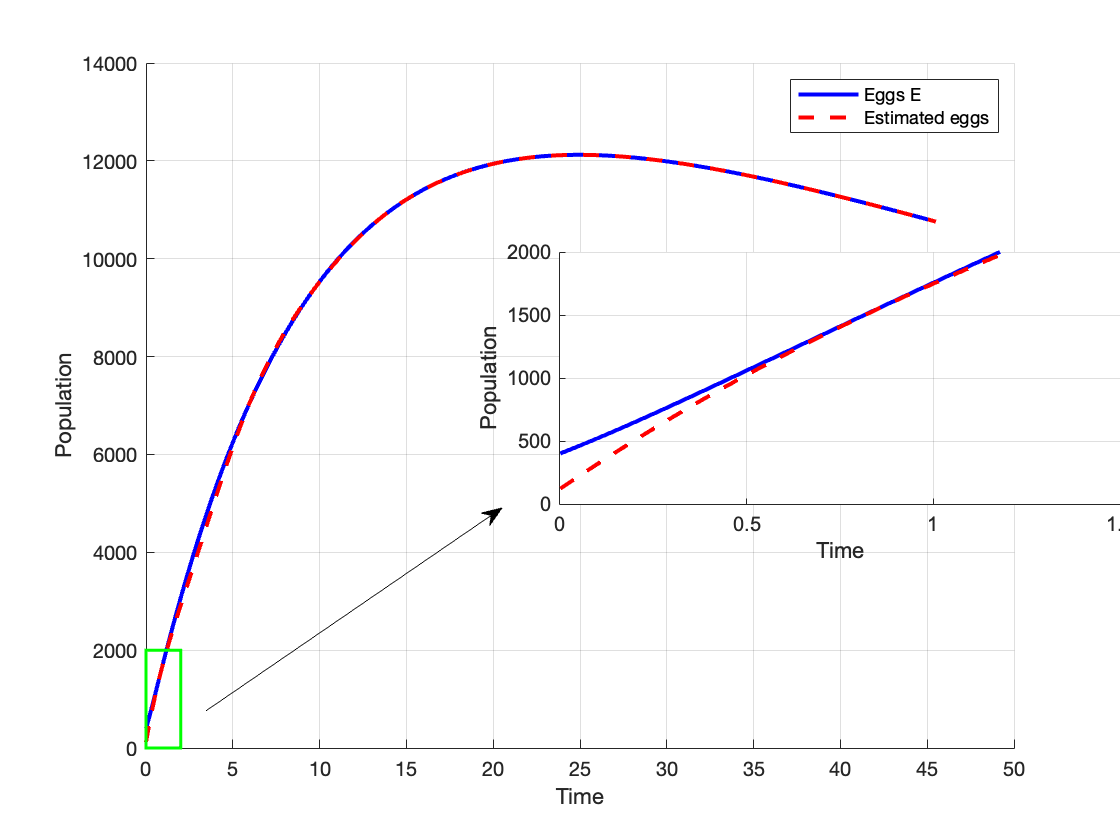}
			\caption{Evolution of states $E$ and corresponding  $\hat{E}$  }
		\end{subfigure}
		\hfill
		\begin{subfigure}[H]{0.45\textwidth}
			\centering
			 \includegraphics[width=\textwidth]{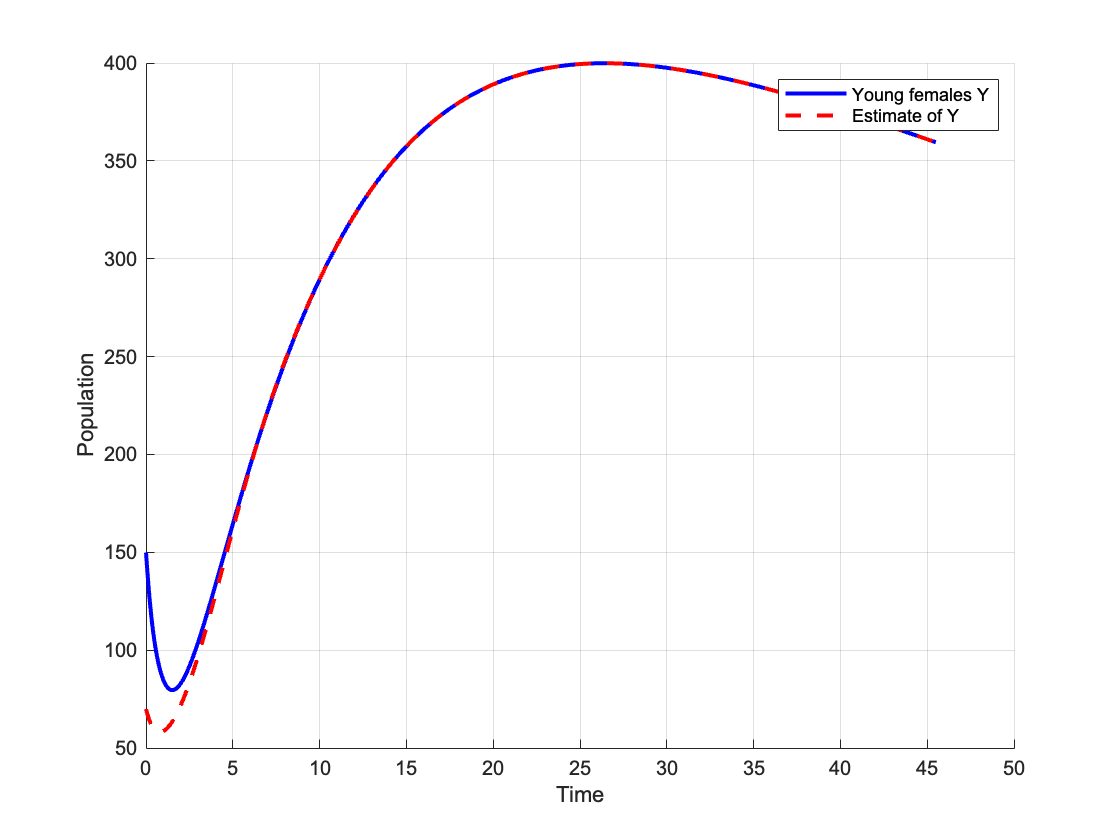}
			\caption{Evolution of states $Y$ and corresponding  $\hat{Y}$ }
		\end{subfigure}
		\hfill
		\begin{subfigure}[H]{0.45\textwidth}
			\centering
			\includegraphics[width=\textwidth]{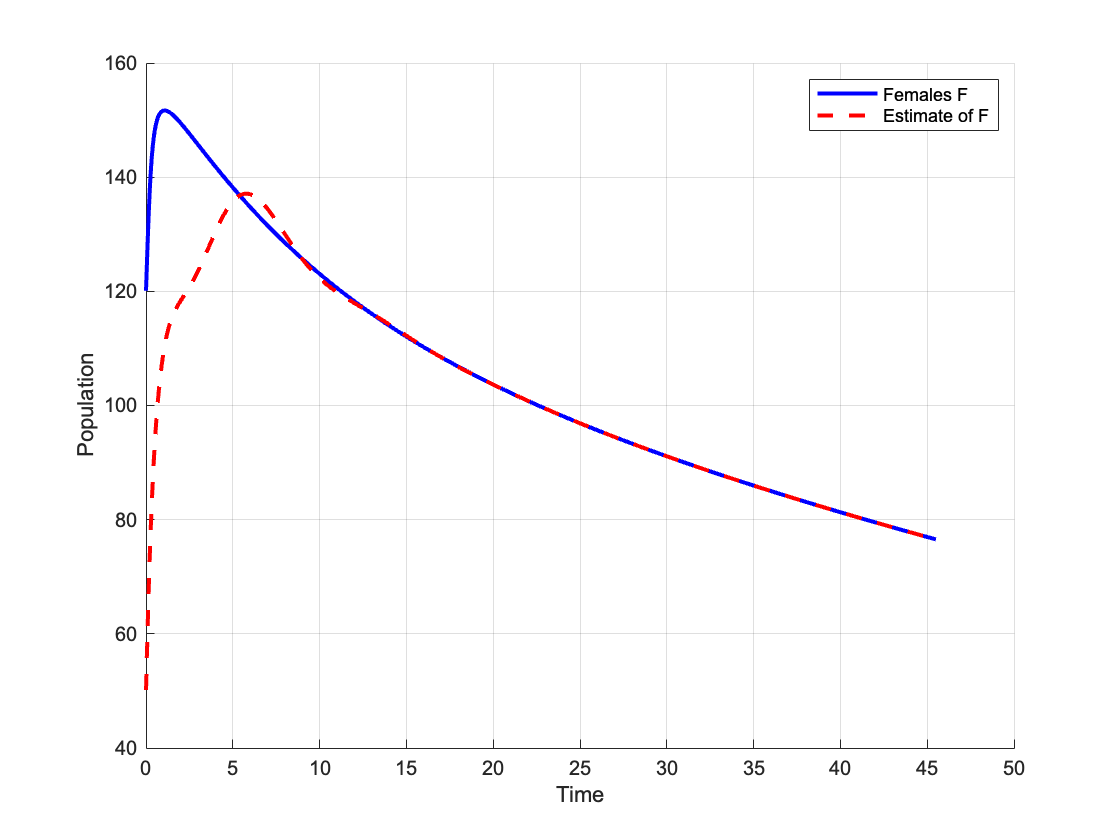}
			\caption{Evolution of states $F$ and corresponding  $\hat{F}$ }
		\end{subfigure}
  \begin{subfigure}[H]{0.45\textwidth}
			\centering
			\includegraphics[width=\textwidth]{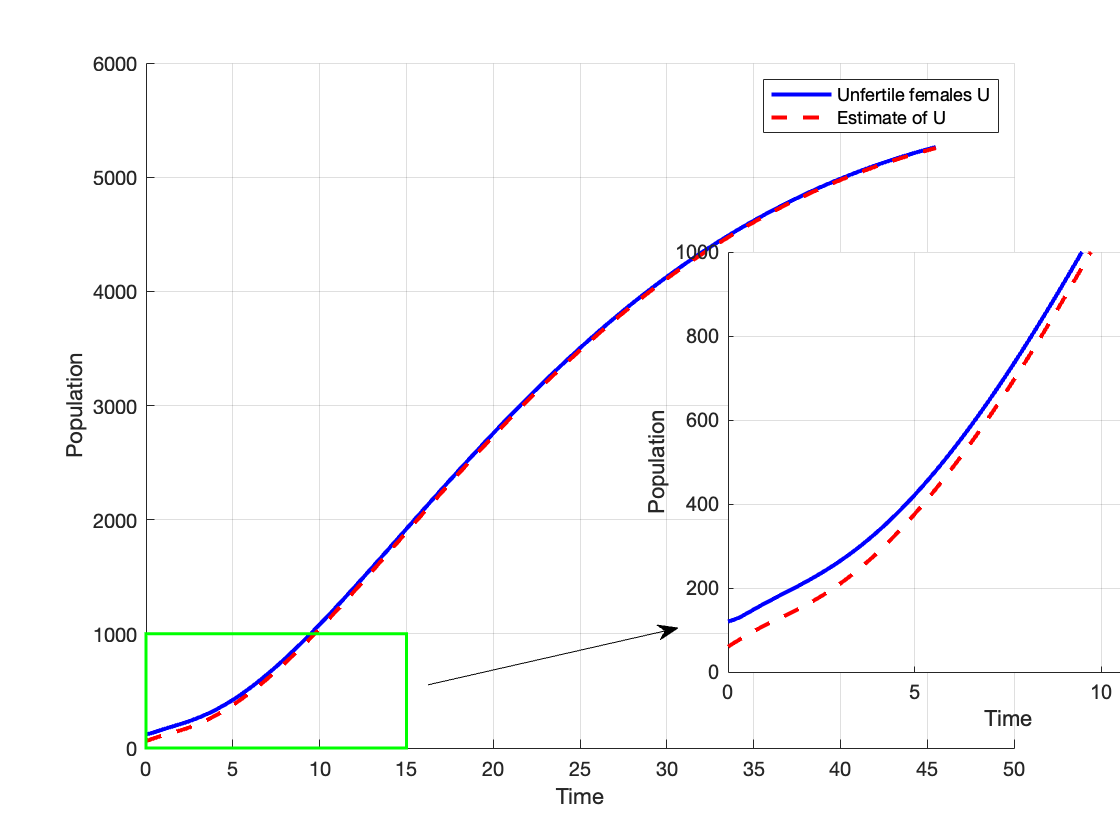}
			\caption{Evolution of states $U$ and corresponding  $\hat{U}$ }
		\end{subfigure}
		\vspace{2mm}
		\caption{Simulation of the system and the observer under the constant input. The dashed lines illustrate the state estimate curves }\label{fig:obsersystem1}
	\end{figure}
	
	With the parameters given in table \ref{eq:tableparametre}, the result for a simulation run 
	with  $x_0  = ( 400, 100, 150, 120,120,50)^T$,  $\hat{x}_0 = (120, 70, 70, 50,60,0)^T$  
	and $u = 500000$ is plotted  in Figure \ref{fig:obsersystem1}. 
	The asymptotic behavior of the different estimates $\hat E, \hat F, \hat Y$ and $\hat U$ (dashed), illustrates the exponential 
	 convergence of the estimation error show in 
	  Theorem~\ref{see:maintheorem}.

	 \section{Dynamic output feedback }\label{see:feed-obser}
	 The  feedback control \eqref{eq:backcontr}
	  depends on the states $E$, $M$, $Y$ and $M_s$.
	 From the measurement of states $M$ and $M_s$, an observer system has been built
	 in the  previous section. This state observer is used to estimate both
	 eggs $E$ and young  females $Y$. In this section we show that ${\bf u}(\hat{X},y)$
	 stabilizes the dynamics at the origin. We consider the coupled system
	 \begin{equation}\label{eq:coupledsystem}
		 \left\{\begin{aligned}
			 &\dot X = f(X, {\bf \hat{u}}(\hat{X},y)),\\
		 & \dot{\hat{X}} = f(\hat{X},{\bf \hat{u}}(\hat{X},y)) - L(C\hat{x}-y),
		 \end{aligned}
		 \right.
	 \end{equation}
	 with
	 \begin{gather}\label{eq:observercontrolU}
		 {\bf \hat{u}}(\hat{X},y) = \max\left(0,S(\hat{X}, y )\right).
	 \end{gather}
	 where
	 $S: \RR^4\times\RR_+^2\rightarrow \RR$, $(\hat{X},y)^T\mapsto S(\hat{X},y)$ is defined by
	 \begin{gather}\label{eq:Sdefine}
		 S(\hat{X},y):=	G(\hat{E}, M,\hat{Y}, M_s )
	 \end{gather}
	
	 The main result of this section is the following Theorem.
	
	 \begin{theorem}
		 \label{see:StabilityObsheorem}
		 Assume that (\ref{eq:inequalitytheta}) holds. Then
		 $\textbf{0}\in \mathcal{E}=\mathcal{N}\times \RR^6$ is globally
		 exponentially  stable in $\mathcal{E}$
		 for system \eqref{eq:coupledsystem}
		 with the feedback law \eqref{eq:observercontrolU}. The convergence rate
		 is bounded by the positive constant $c_e$ defined by
		 \begin{gather}\label{ce}
			c_e:=\min\{c_1,c_2,c',\frac{\xi}{4}\}.
		 \end{gather}
	 \end{theorem}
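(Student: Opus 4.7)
The plan is to combine the Lyapunov function $W$ from the proof of Theorem \ref{thm-backstepping} with the quadratic observer Lyapunov function $\mathcal{V}(e):=e^{T}Pe$ from Theorem \ref{see:maintheorem}, in a composite candidate $\mathcal{W}(X,\hat X):=W(X)+\lambda\,\mathcal{V}(\hat X-X)$ for a positive weight $\lambda$ to be chosen at the end of the argument. The first step is to check that the observer error $e:=\hat X-X$ still obeys the autonomous equation $\dot e=(\mathcal{A}(b(t))-LC)e$ of Section \ref{see:Observertheory}: since $\hat u$ enters \eqref{eq:coupledsystem} affinely (through $D$) and $B$ is evaluated at the same measured $y=CX$ in both equations, the control cancels in the subtraction, and Theorem \ref{see:maintheorem} then yields $\dot{\mathcal{V}}(e)\leq -\xi\|e\|^{2}$ along every trajectory, independently of $X$.

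The second step is to bound the derivative of $W$ along the perturbed state dynamics. Since $M$ and $M_{s}$ are measured exactly, $\hat u-u$ only involves $\tilde E=\hat E-E$ and $\tilde Y=\hat Y-Y$; expanding $G(\hat E,M,\hat Y,M_{s})-G(E,M,Y,M_{s})$ and bounding the state-dependent coefficients using \eqref{ineqMMs}, together with the $1$-Lipschitz property of $\max(0,\cdot)$, one obtains $|\hat u-u|\leq C_{G}\|e\|$ for an explicit constant $C_{G}>0$. Because the control only acts on $\dot M_{s}$, the derivative of $W$ along the perturbed dynamics differs from its state-feedback counterpart by exactly $\frac{\partial W}{\partial M_{s}}(\hat u-u)$, and $|\partial W/\partial M_{s}|\leq 3\alpha$ again by \eqref{ineqMMs}. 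Combining with the bound $\dot W\leq -c_{p}W$ of Theorem \ref{thm-backstepping} applied to the state feedback yields the pointwise inequality $\dot W\leq -c_{p}W+3\alpha C_{G}\|e\|$ outside the singular set $\{M+M_{s}=0\}$.

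To conclude one adds $\lambda\dot{\mathcal{V}}\leq -\lambda\xi\|e\|^{2}$ to the previous bound and uses Young's inequality to absorb the cross term into the quadratic negative part, picking $\lambda$ large enough that a quarter of the observer dissipation remains, which eventually gives the rate $\xi/4$ on the error part of $\mathcal{W}$. The main obstacle is precisely the linear-in-$\|e\|$ nature of the cross term: a direct Young's step against $-\lambda\xi\|e\|^{2}$ leaves a constant residue that cannot be absorbed by $-c_{p}W$ near the origin of the state space. The cleanest way around this is to exploit the cascade structure of \eqref{eq:coupledsystem} that Step 1 already provides: $\|e(t)\|$ decays exponentially irrespective of $X$, so the perturbation acting on $W$ in Step 2 is itself exponentially small, and a variation-of-constants comparison for the scalar inequality $\dot W\leq -c_{p}W+3\alpha C_{G}\|e(t)\|$ gives exponential decay of $W$ at rate $\min(c_{p},\xi/4)$. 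The singular set $\{M+M_{s}=0\}$ is finally treated in the Filippov sense by the same case-split on $Y(0)=0$ versus $Y(0)>0$ as in the proof of Theorem \ref{thm-backstepping}; since $\mathcal{V}$ is unaffected by this stratum, the combined argument extends verbatim and gives the announced rate $c_{e}=\min\{c_{1},c_{2},c',\xi/4\}$.
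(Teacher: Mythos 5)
Your argument is correct in substance, but it takes a genuinely different route at the decisive step. The paper never works with the quadratic error functional: it builds the strict composite Lyapunov function $H(X,\hat X)=W(X)+\lambda\sqrt{\mathcal{V}(e)}$, i.e.\ the $P$-norm $\norm{e}_P$ of the error rather than its square. Since $\tfrac{d}{dt}\sqrt{\mathcal{V}(e)}=\dot{\mathcal{V}}(e)/(2\sqrt{\mathcal{V}(e)})\leq -\tfrac{\xi}{2}\norm{e}_P$, the error part dissipates \emph{linearly} in $\norm{e}_P$, so the linear cross term --- which the paper bounds exactly as you do, via a Lipschitz estimate $\vert{\bf \hat u}(\hat X,y)-{\bf \hat u}(X,y)\vert\leq C\norm{e}$ (your expansion of $G$, affine in $E$ and $Y$ with coefficients bounded by \eqref{ineqMMs}, is in fact more detailed than the paper's unproved lemma) together with $\alpha\vert\theta M-M_s\vert(3\theta M+M_s)/(\theta M+M_s)^2\leq 3\alpha$ --- is absorbed directly by choosing $\lambda=4C'\beta/\xi$, giving $\dot H\leq-\min\{c_1,c_2,\tfrac{\xi}{4}\}H$ off the stratum $\{M+M_s=0\}$ and $\dot H\leq-\min\{c',\tfrac{\xi}{2}\}H$ on it. You instead keep $\mathcal{V}$ quadratic, correctly identify that Young's inequality leaves an unabsorbable constant residue, and then exploit the cascade structure: autonomous exponential decay of $e$, followed by a comparison/variation-of-constants argument on $\dot W\leq-c_pW+3\alpha C_G\norm{e(t)}$. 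This is a valid alternative and does deliver the announced rate $c_e=\min\{c_1,c_2,c',\tfrac{\xi}{4}\}$ (your $\xi/4$ margin even covers the resonant case $c_p=\xi/2$), at the price of not producing a strict Lyapunov function for the coupled system and of needing the routine extra step of converting decay of $W(X(t))$ and $\norm{e(t)}$ into decay of $\norm{(X,\hat X)}$ via the degree-one homogeneity of $W$. Two points you should state more carefully: on the stratum the Filippov regularization replaces $y_1/(y_1+y_2)$ by some $\kappa(t)\in[0,1]$, and the error decay persists there because the LMI \eqref{eq:LMIcondition} is imposed at all vertices of $\mathcal{V}_{\mathcal{S}_{n,n}}$, not merely because $\mathcal{V}$ is insensitive to that set; and your rate bookkeeping tacitly uses, as the paper itself does in \eqref{dotU<-U}, that the LMI margin yields $\dot{\mathcal{V}}\leq-\xi\norm{e}_P^2$ (decay of $\mathcal{V}$ at rate $\xi$); with the Euclidean reading $\dot{\mathcal{V}}\leq-\xi\norm{e}^2$ the error-decay rate would carry a factor $1/\lambda_{\max}(P)$, a normalization issue shared with the paper rather than a defect specific to your argument.
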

\begin{proof}
	 Let $\lambda>0$ and we define $H : \mathcal{E}\to\RR$ by
\begin{gather}
	H(X,\hat{X}) = W(X) + \lambda\sqrt{\mathcal{V}(e)}
\end{gather}
with  $ e = \hat{X}-X$.
\begin{gather}
H \text{ is continuous on}\;\;  \mathcal{E} \;\;\mbox{and}\;\;\mathcal{C}^1
\;\; \mbox{on}\;\;\text{ $\mathcal{E}\setminus \left\{(X,\hat{X})\in \mathcal{E};\; M+M_s=0\right\}$},
\\
\label{Winftyatinfty2}
H(X,\hat{X})\to +\infty \;\;\mbox{as} \;\; \norm{(X,\hat{X})} \to +\infty,
\\
\label{H>0-sec2}
H(X,\hat{X})>H(\textbf{0})=0,\; \forall (X,\hat{X})\in \mathcal{E}\setminus\{\textbf{0}\}.
\end{gather}
In this proof, from now on  we assume that $(X,\hat{X})^T$ is in $\mathcal{E}$. Until \eqref{dotH<0} included, we also assume that
\begin{gather}
\label{MMsnotboth0}
(M,M_s)\not =(0,0).
\end{gather}
One has
\begin{eqnarray*}	
	\dot{H}(X,\hat{X})=
	\dot{V}(X) + \alpha\frac{(\theta M-M_s)}{(\theta M + M_s)^2}\Big[ \frac{\phi Y(\theta M + M_s)^2}{\alpha(M+M_s)} \\+ ((1-\nu)\nu_E\theta E -\theta \delta_M M)(\theta M +3M_s)\nonumber
	\\-{\bf \hat{u}}(\hat{X},y)(3\theta M + M_s)+\delta_s{M}_s(3\theta M + M_s)\Big] + \lambda\frac{\dot{\mathcal{V}}(e)}{2\sqrt{\mathcal{V}(e)}}.
	\label{dotW=}
	\end{eqnarray*}
Replacing the term ${\bf \hat{u}}(\hat{X},y)$ by ${\bf \hat{u}}(\hat{X},y)-{\bf \hat{u}}({X},y) + {\bf \hat{u}}({X},y)$
we get
  \begin{gather}	
	\dot{H}(X,\hat{X})=
	\dot{W}(X)+ \alpha\frac{(\theta M-M_s)(3\theta M + M_s)}{(\theta M + M_s)^2}({\bf u}(\hat{X},y)- {\bf u}(X,y)) + \lambda\frac{\dot{\mathcal{V}}(e)}{2\sqrt{\mathcal{V}(e)}}.
	\end{gather}

\begin{lemma}
	There exist $C>0$ such that,  for all $(X,\hat{X})\in\mathcal{E}$ and for all $y\in\RR_+^2$,
	\begin{gather}
		\norm{{\bf \hat{u}}(\hat{X},y) - {\bf \hat{u}}(X,y)} \leq C\norm{\hat{X}-X}.
	\end{gather}
\end{lemma}
Note that $\dot{\mathcal{V}}(e)\leq -  \xi\norm{e}_P^2$. Thanks to this lemma, there exists $C'>0$ independent of $y$ such that
\begin{align}	
	\dot{H}(X,\hat{X})&\leq  \dot{W}(X)+ C'\norm{e} - \xi\lambda\frac{\norm{e}_P^2}{2\sqrt{\mathcal{V}(e)}}.
	\end{align}
 Note that there exists a constant $\beta>0$ such that
 and $ \norm{e}\leq \beta\norm{e}_P$. So
\begin{align}	
	\dot{H}(X,\hat{X})&\leq
	\dot{W}(X) - (\frac{\lambda\xi}{2}-\beta C')\norm{e}_P.
\end{align}
Hence for $\lambda =4C'\beta/\xi$, and using the relation \eqref{c1} and \eqref{c2},
\begin{align}	
	\dot{H}(X,\hat{X})\leq
	-\min\{c_1,c_2\} W(X) - \frac{\lambda\xi}{4}\norm{e}_P.
\end{align}
We conclude that there exists
a constant
\begin{gather}\label{cs}
	c_s := \min\{c_1,c_2,\frac{\xi}{4}\}
\end{gather}
such that
\begin{gather}\label{dotH<0}
	\dot H(X,\hat{X}) < -c_s H(X, \hat{X}),\;\mbox{if} \;M+M_s\not=0.
\end{gather}
Let us now deal with the case where \eqref{MMsnotboth01} is not satisfied. As we explained previously in the proof of the Theorem~\ref{thm-backstepping}, it is sufficient to study  only the case $t_s\in (0,+\infty)$.
Let  $t\mapsto (E(t),M(t),Y(t),F(t), U(t), M_s(t),\hat{E}(t),\hat{M}(t),\hat{Y}(t),\hat{F}(t), \hat{U}(t), \hat{M}_s(t))^T$  be a solution (in the Filippov sense)
of the closed-loop system \eqref{eq:coupledsystem} such that, for some  $t_s\in (0,+\infty)$
\begin{equation}
\label{M+Ms=0tscouplesys}
M(t)+M_s(t)=0\; \forall t\in [0,t_s]
\end{equation}
Note that \eqref{M+Ms=0tscouplesys} implies that
\begin{equation}
\label{M=Ms=0tscouplesys}
M(t)=M_s(t)=0,\; \forall t\in [0,t_s]
\end{equation}
From \eqref{ineqMMs}, \eqref{M=Ms=0ts} and the definition of a Filippov solution, one has on $(0,t_s)$
   \begin{align}
    \label{eqreal-Filippov}
       & \left(\begin{array}{ccccc}
\dot{E}\\\dot{{M}}\\\dot{{Y}}\\\dot{{F}}\\\dot{{U}}\\\dot{{M}}_s
		\end{array}\right) = \begin{pmatrix}
			\beta_E {F}(1-\frac{E}{K}) - \big( \nu_E + \delta_E \big) {E}
			\\  (1-\nu)\nu_E E - \delta_M M
			\\\nu\nu_E E-\kappa(t)\Delta\eta Y-(\eta_2+\delta_Y)Y
			\\  \eta_1 Y \kappa(t) - \delta_F {F}
             \\ \eta_2(1-\kappa(t)) Y - \delta_U U
			\\ \max(0,\hat{Y}g_1+\hat{E}g_2) -\delta_s{M}_s
		\end{pmatrix}
   \\
   &\left(\begin{array}{cccccc} \label{eqhat-Filippov}
\dot{\hat{E}}\\\dot{\hat{M}}\\\dot{\hat{Y}}\\\dot{\hat{F}}\\\dot{\hat{U}}\\\dot{\hat{M}}_s
		\end{array}\right) = 
		    \begin{pmatrix}
			\beta_E \hat{F} - \big( \nu_E + \delta_E \big) \hat{E}
			\\  (1-\nu)\nu_E \hat{E} - \delta_M \hat{M}
			\\\nu\nu_E \hat{E}-\kappa(t)\Delta\eta \hat{Y}-(\eta_2+\delta_Y)\hat{Y}
			\\  \eta_1 \hat{Y} \kappa(t) - \delta_F \hat{F}
             \\ \eta_2(1-\kappa(t)) \hat{Y} - \delta_U \hat{U}
			\\ \max(0,\hat{Y}g_1+\hat{E}g_2) -\delta_s\hat{M}_s
		\end{pmatrix} -LC\hat{X},
    \end{align}
    with
\begin{equation}
\kappa(t)\in [0,1],\; g_1(t)\in \frac{\phi}{\alpha}[0,3\theta+1] \text{ and } g_2(t)\in
	(1-\nu)\nu_E\theta [0,4].
\end{equation}
From \eqref{M=Ms=0tscouplesys} and the second line of \eqref{eqreal-Filippov}, one has
\begin{equation}
\label{E=0tscouplsys}
E(t)=0, \; \forall t \in [0,t_s]
\end{equation}
 From the first line of \eqref{eqreal-Filippov} and  \eqref{E=0tscouplsys}, we get 
\begin{equation}
\label{F=0tscouplsys}
F(t)=0, \; \forall t \in [0,t_s].
\end{equation}
In the case where $Y(0) = 0$,  from the third line of \eqref{eqreal-Filippov} and \eqref{E=0tscouplsys}, one has
\begin{equation}
\label{Y=0tscouplsys}
Y(t)=0, \; \forall t \in [0,t_s].
\end{equation} 
To summarize, from \eqref{M=Ms=0tscouplesys},  the fifth line of \eqref{eqreal-Filippov}, \eqref{E=0tscouplsys}, \eqref{F=0tscouplsys} and \eqref{Y=0tscouplsys}
\begin{equation}
E(t)=M(t)=Y(t)=F(t)=M_s(t)=0 \text{ and }\dot U(t)=-\delta_UU(t),\;\forall t \in [0,t_s],
\end{equation} 
which, with \eqref{def-V}, \eqref{def-c>0} and \eqref{eq:lyapunov-functionW-M=Ms=0}, gives
\begin{gather}
\label{dotWleq-cW2}
\dot W(t)=-\sigma\delta_UU(t)\leq -\delta_UW(t), \; \forall t \in [0,t_s].
\end{gather}

In the case where $Y(0) >0 $,  from the third line of \eqref{eqreal-Filippov},
\begin{equation}
\label{Y>0tscouplsys}
Y(t)>0, \; \forall t \in [0,t_s],
\end{equation} 
which, together with the fourth line of \eqref{eqreal-Filippov} and \eqref{F=0tscouplsys}, implies
\begin{equation}
\label{kappa0tscouplsys}
\kappa(t)=0, \; \forall t \in [0,t_s].
\end{equation} 
Referring to this case already studied in the proof of Theorem~\ref{thm-backstepping} we get
\begin{gather}
\dot W(t)\leq -c'W(t), \; \forall t \in [0,t_s].
\end{gather}

\begin{align}
    \kappa(t)\in[0,1], g_1(t)\in \frac{\phi}{\alpha}[0,3\theta+1] \text{ and } g_2(t)\in
	(1-\nu)\nu_E\theta [0,4],\\\dot M_s(t) = \max(0,\hat{Y}g_1+\hat{E}g_2) -\delta_s{M}_s
\end{align}
Since $M_s(t) = 0 \;\forall t\in[0,t_s]$, $\max(0,\hat{Y}g_1+\hat{E}g_2)=0$.
For all $\kappa(t)\in [0,1]$, in these two cases, the dynamics of the observation error remains
\begin{gather}
	\dot{e} = (\mathcal{A}(\kappa(t)) - LC)e,
\end{gather}
    and one has
    \begin{gather}
        \dot{H}(X,\hat{X}) =-c^{\prime}W(X)-\frac{\lambda\xi}{2}\norm{e}_P.
    \end{gather}
	We conclude that there exists
	a constant
	\begin{gather}\label{cw}
		c_w := \min\{c',\frac{\xi}{2}\},
	\end{gather} 
	 such that
 \begin{gather}
    \dot{H}(X,\hat{X}) \leq -c_w H(X,\hat{X}).
 \end{gather}
 This proves  Theorem~\ref{see:StabilityObsheorem} and gives the global exponential stability
 with the exponential decay rate $c_e$ given by  relation \eqref{ce}.
 \end{proof}
	 \subsection{Numerical simulations}
	 We apply the backstepping control $u$ function
	 of the measured states $y$ and
	 the estimated states $\hat{E}$ and $\hat{Y}$ given by
	 the relation \eqref{eq:observercontrolU} with  the following  initial condition $x_0  = ( 20000, 5000, 1500, 12000,500)$ and $\hat{x}_0 = (2000, 500, 150, 1200,0)$.
		 \begin{figure}[H]
			\centering
		\begin{subfigure}[H]{0.4\textwidth}
			\centering
			\includegraphics[width=\textwidth]{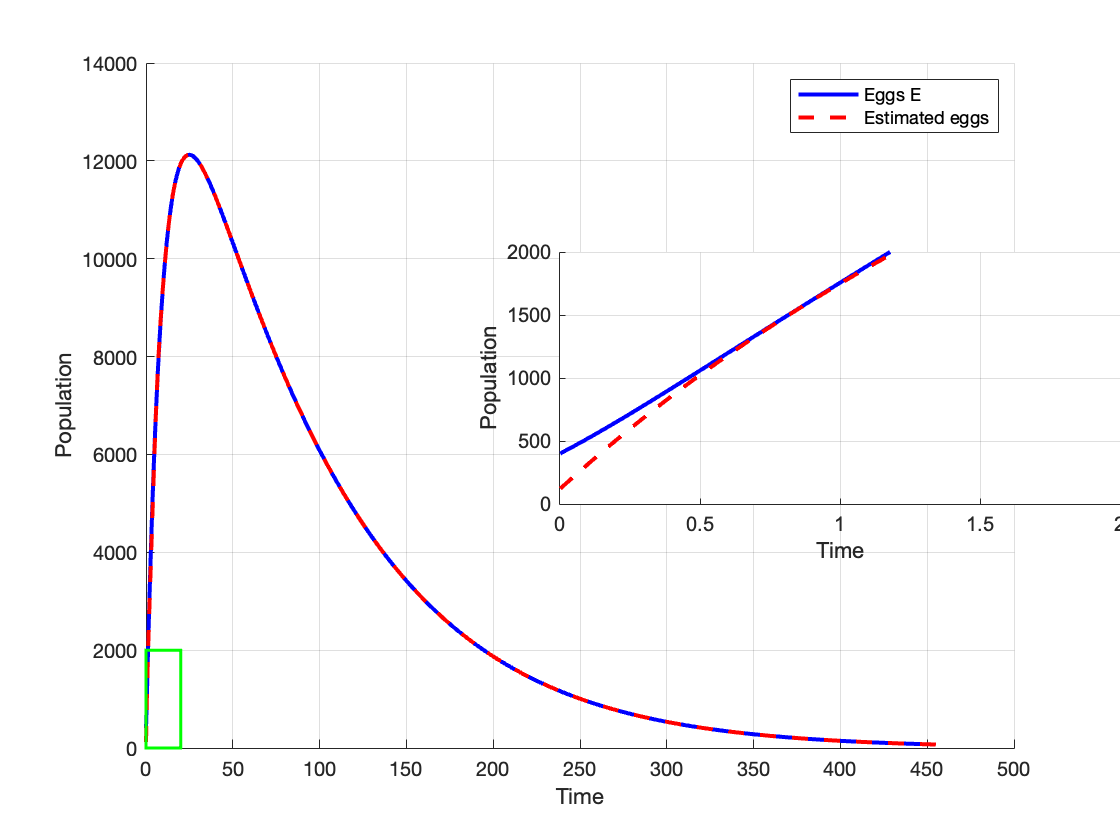}
			\caption{Evolution of states $E$ and estimate  $\hat{E}$  }
		\end{subfigure}
		\hfill
		\begin{subfigure}[H]{0.4\textwidth}
			\centering
			 \includegraphics[width=\textwidth]{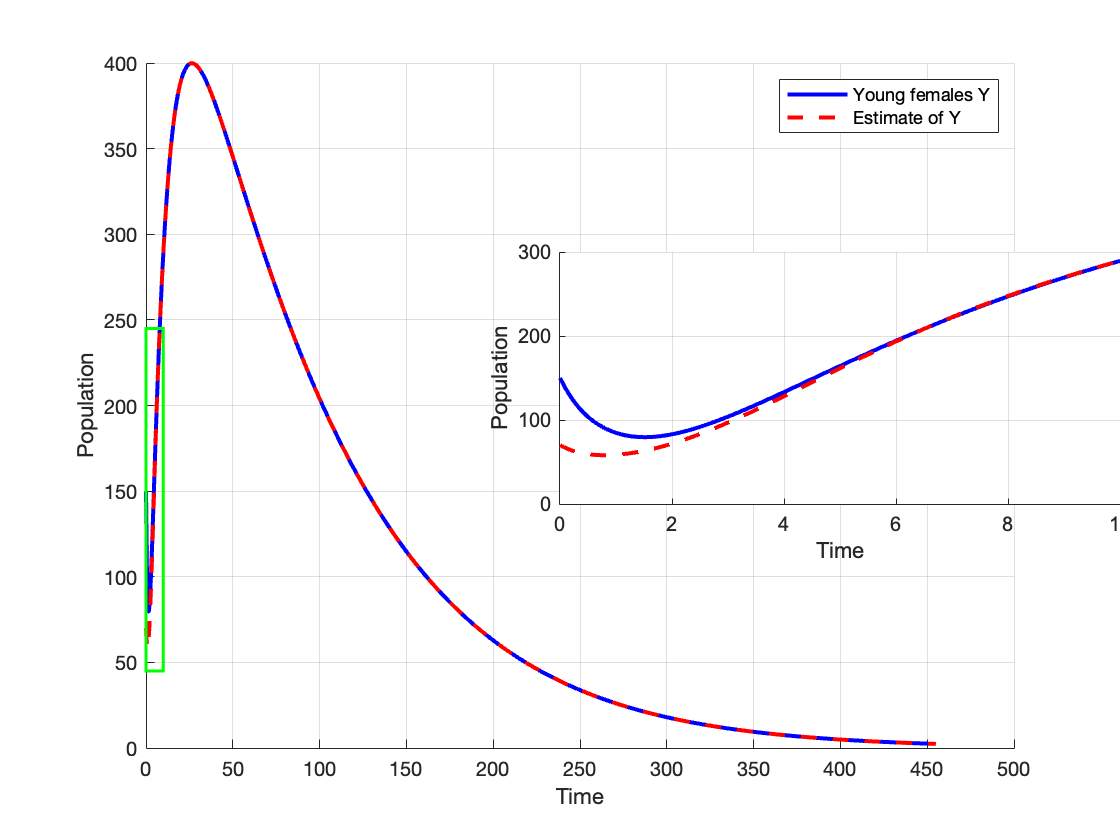}
			\caption{Evolution of states $Y$ and estimate  $\hat{Y}$ }
		\end{subfigure}
		\hfill
		\begin{subfigure}[H]{0.4\textwidth}
			\centering
			\includegraphics[width=\textwidth]{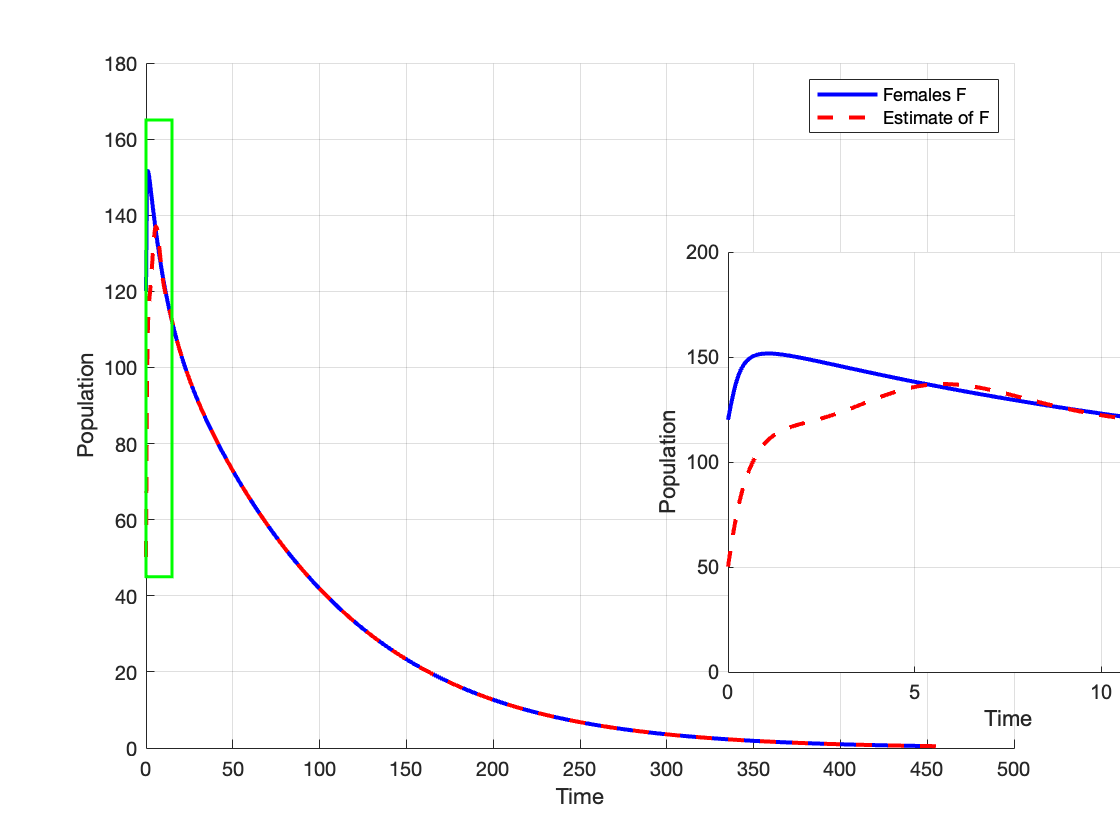}
			\caption{Evolution of states $F$ and estimate  $\hat{F}$ }
		\end{subfigure}
  \hfill
  \begin{subfigure}[H]{0.4\textwidth}
			\centering
			\includegraphics[width=\textwidth]{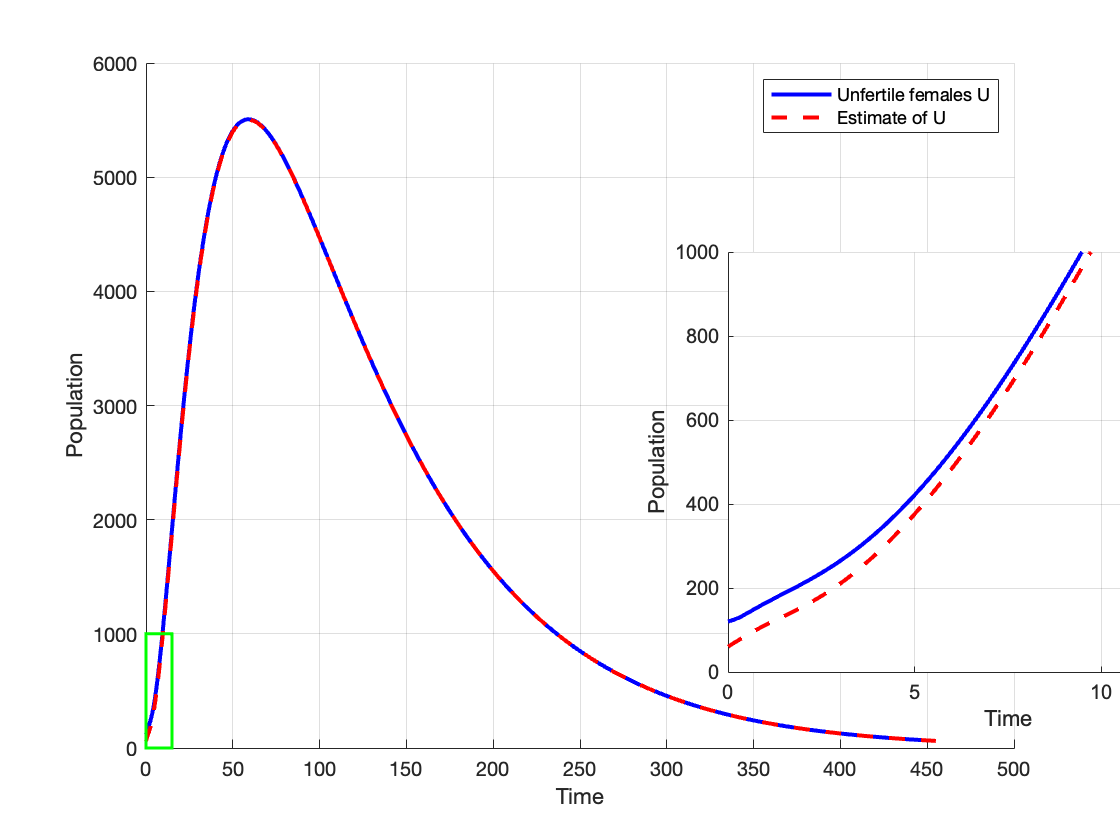}
			\caption{Evolution of states $U$ and estimate  $\hat{U}$ }
		\end{subfigure}
		\vspace{3mm}
			\caption{Simulation of the
			SIT model when applying backstepping feedback law with estimated and measured
			states \eqref{eq:observercontrolU} . }\label{fig:Couplesystem}
		\end{figure}
		\begin{figure}[H]
			\centering
			\includegraphics[width=0.6\textwidth]{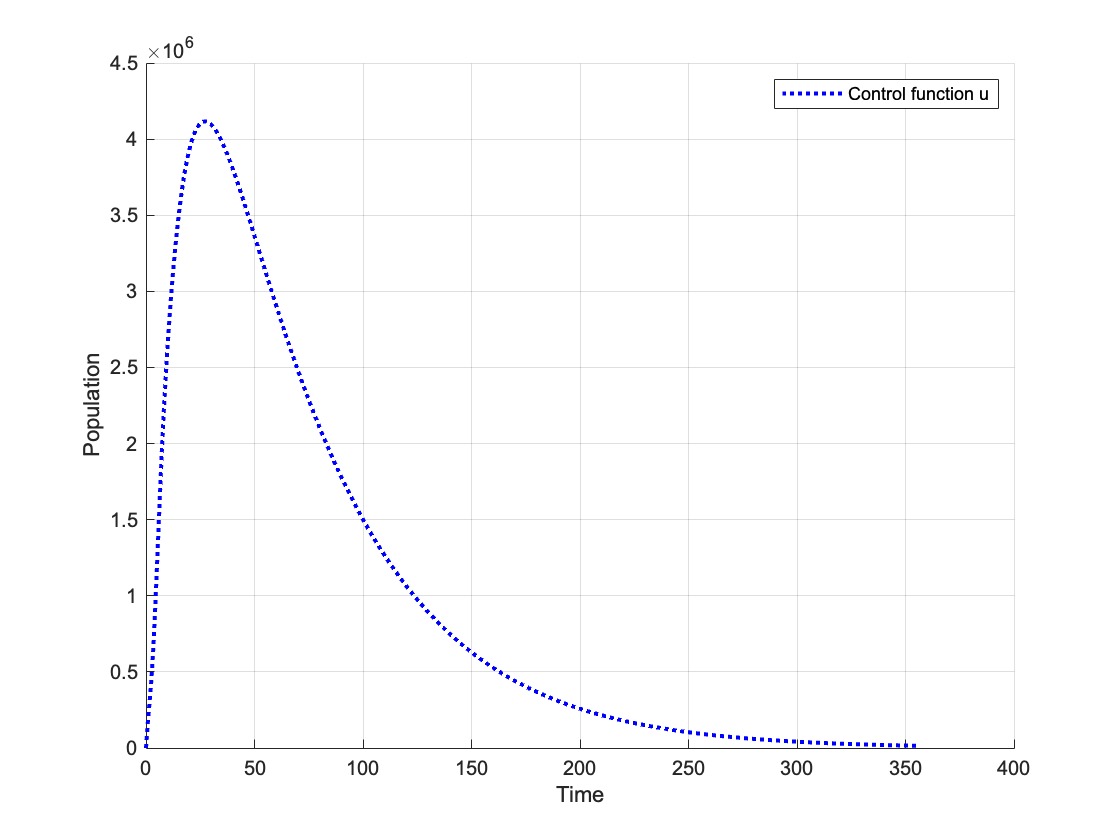}
				\caption{Evolution of control function ${\bf u}(\hat{X},y)$.}
		\label{fig:estimU}
		\end{figure}		
		The response of  system ~\eqref{eq:coupledsystem} to 
		the backstepping control \eqref{eq:observercontrolU} is 
		illustrated in the Figure \ref{fig:Couplesystem}. The 
		asymptotic convergence in large $t$ of the different estimated
		$\hat E, \hat F, \hat Y$ and $\hat U$  (dashed) 
		to their corresponding state variables $E, F, Y$ and $U$ respectively,  illustrates the exponential 
		convergence of the estimation error stated 
		in  Theorem~\ref{see:maintheorem}. The convergence 
		of the states and their estimates,  
		towards zero, proves the efficiency of the
		 control\eqref{eq:observercontrolU} as was show 
		 in the Theorem~\ref{see:StabilityObsheorem}. 
		 Figure \ref{fig:estimU} shows that the 
		  applied  control function decreases, when the density of the 
		  target population decreases.

	 \section{Conclusion}
  In this work, we have built a feedback control law 
  to stabilize the SIT model presented
   in \cite{esteva2005mathematical,anguelov2012mathematical} 
   at extinction. Control by state feedback is a type of
    control rarely proposed in the literature for 
	the overall stabilization of the SIT model.  
	The feedback control \eqref{eq:backcontr}  developed in this work has many advantages 
	including  robustness 
	to changing parameters.
  We have shown in Remark~\ref{see:Robustremark} 
  that despite the margin of error that can be made in 
  the estimation of the  parameters, 
  this feedback control  still makes the system converge to extinction.
  Moreover its does not 
  depend on environmental capacity and 
  this control law ensures exponential 
  stability with the same convergence rate  for the  SIT 
  system even  in the high  environmental capacity  limit (see Theorem~\ref{thm-backsteppingKinfty}).  
   Remark~\ref{see:convergremark} shows that when the density of the target population
  decreases, the control also decreases

   In Section~\ref{see:Observersection} of our work, we built an observer for the SIT model where, using the measurement of male mosquitoes, our state estimator gives us an estimate of the other states of the system. This aspect is rarely studied for this type of dynamics. An accurate estimate of the mosquito population enables resources to be allocated more efficiently. If the intervention is effective in some areas but not in others, resources can be reallocated to maximize impact. On the other hand, the data collected during the SIT intervention provides essential information on the impact of the control in the conditions of the intervention area. This will enable informed decisions on future control strategies to be adopted according to conditions in the intervention zone by adding complementary methods or by  adapting existing approaches.

   One of the applications we made was to show in Section~\ref{see:feed-obser} that by using the data estimated via our observer to adjust the feedback control, we globally stabilize the system upon extinction.
   Figure~\ref{fig:Couplesystem} shows that the
 difficulty of estimating eggs and young females during an intervention can be  compensated  by the application of
 the observer system.
   Data collected on the mosquito population is also used in epidemic prevention programs. They help to adapt public health programs for better control of mosquito-borne diseases.
   
\section*{Use of AI tools declaration}
The authors declare they have not used Artificial Intelligence (AI) tools in the creation of this article.

\section*{Acknowledgements}
The author wishes to thank Luis Almeida and Jean-Michel Coron for having drawn his attention to this problem and for the many  enlightening discussions during this work.

\section*{Conflict of interest}

The authors declare there is no conflict of interest.

\end{document}